\documentclass[a4paper,11pt]{amsart}

\usepackage{epsfig}
\usepackage{amsthm,amsfonts}
\usepackage{amssymb,graphicx,color}
\usepackage{float}
\graphicspath{ {Images/} }
\usepackage[labelfont={bf},textfont={it}]{caption}
\usepackage[labelfont={bf, it}]{subcaption}
\usepackage[all]{xy}
\usepackage{verbatim}
\usepackage{hyperref}
\usepackage{enumitem}
\usepackage{tikz-cd}

\newtheorem{theorem}{Theorem}[section]
\newtheorem*{theorem*}{Theorem}

\newtheorem{corollary}[theorem]{Corollary}
\newtheorem{proposition}[theorem]{Proposition}
\newtheorem{definition}[theorem]{Definition}

\newtheorem{question}{Question}
\newtheorem{claim}{Claim}[theorem]



\newenvironment{customthm}[1]
  {\innercustomthm}
  {\endinnercustomthm}



\newcommand{\R}{\mathbb{R}}

\newcommand{\N}{\mathbb{N}}

\makeatletter
\@namedef{subjclassname@2020}{%
  \textup{2020} Mathematics Subject Classification}
\makeatother

\begin{document}

\title[Local vs. global Lipschitz geometry]
{Local vs. global Lipschitz geometry}

\author[J. E. Sampaio]{Jos\'e Edson Sampaio}
\address{Jos\'e Edson Sampaio: Departamento de Matem\'atica, Universidade Federal do Cear\'a, Av. Humberto Monte, s/n Campus do Pici - Bloco 914, 60455-760, Fortaleza-CE, Brazil. E-mail: {\tt edsonsampaio@mat.ufc.br}}

\thanks{The author was partially supported by CNPq-Brazil grant 310438/2021-7. This work was supported by the Serrapilheira Institute (grant number Serra -- R-2110-39576).}

\keywords{Lipschitz geometry, Stereographic compactifications, Inversions, Local vs. global}
\subjclass[2020]{14Pxx; 53Cxx (primary); 32S50 (secondary)}

\begin{abstract}
In this article, we prove that for a definable set in an o-minimal structure with connected link (at 0 or infinity), the inner distance of the link is equivalent to the inner distance of the set restricted to the link. With this result, we obtain several consequences. We present also several relations between the local and the global Lipschitz geometry of singularities. For instance, we prove that two sets in Euclidean spaces, not necessarily definable in an o-minimal structure, are outer lipeomorphic if and only if their stereographic modifications are outer lipeomorphic if and only if their inversions are outer lipeomorphic.
\end{abstract}

\maketitle
\tableofcontents
\section{Introduction}
The local study of Lipschitz geometry of singularities is a well-established field of study that has been very active in the last 25 years.
The interest in the Lipschitz geometry of singularities at infinity is more recent, but it has been gaining a lot of attention in the last 5 years, for example, we can cite \cite{BirbrairFSV:2020,BobadillaFS:2018,CostaGM:2023,DiasR:2022,FernandesJ:2023,FernandesJS:2022,FernandesJS:2023,FernandesS:2020,FernandesS:2022,FernandesS:2022b,FernandesS:2023,Jelonek:2021,KernerPR:2018,Nhan:2023b,Sampaio:2019,Sampaio:2020,Sampaio:2023,Sampaio:2023b,SampaioS:2022,SampaioS:2023,SampaioS:2023b,Targino:2020}.

The goal of this paper is to present some relations between the local and the global Lipschitz geometry of singularities.

An example of such a relation was presented by Fernandes and the author of this article in \cite{FernandesS:2022}. By using the ideas of \cite{Birbrair}, the authors of \cite{FernandesS:2022} presented a global classification of semi-algebraic surfaces with isolated singularities under bi-Lipschitz homeomorphisms, concerning its inner distance (so-called inner lipeomorphims). As a consequence, they obtained the following result (see definitions of stereographic modification and stereographic compactification of a set in Section \ref{sec:outer_geo}):
\begin{theorem}[Corollary 5.7 in \cite{FernandesS:2022}]
	\label{cor:compactification_eq}
Let $X\subset\R^n$ and $Y\subset\R^m$ be closed semi-algebraic surfaces with isolated inner Lipschitz singularities. Then, $X$ and $Y$ are inner lipeomorphic if and only if the pointed stereographic compactifications $(\widehat{X},e_{n+1})$ and $(\widehat{Y},e_{m+1})$ are inner lipeomorphic.
\end{theorem}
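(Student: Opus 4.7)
My plan is to reduce the statement to a classification theorem for semi-algebraic surfaces with isolated inner Lipschitz singularities, so that both sides of the equivalence become readable through the same combinatorial-geometric invariants, and the stereographic compactification merely moves the ``end at infinity'' invariant of $X$ into a ``local invariant at $e_{n+1}$'' of $\widehat{X}$.

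First, I would invoke the classification (the main result of \cite{FernandesS:2022}, in the spirit of \cite{Birbrair}) which says that a closed semi-algebraic surface with isolated inner Lipschitz singularities is inner-lipeomorphic to a canonical model assembled from finitely many local germs (one at each finite inner Lipschitz singularity), one ``germ at infinity'' capturing the geometry of the complement of a large compact set, and flat cylindrical pieces encoding how the germs are glued. Thus $X$ and $Y$ are inner lipeomorphic if and only if they have the same multiset of finite local inner Lipschitz types, the same inner Lipschitz type at infinity, and the same combinatorial gluing data.

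Second, I would analyze the stereographic map $\sigma\colon\Sp^n\setminus\{e_{n+1}\}\to\R^n$. On every ball $B(0,R)\subset\R^n$ the inverse $\sigma^{-1}$ is bi-Lipschitz onto its image, so the germ of $\widehat{X}$ at any finite singular point $p\in X\subset\R^n$ coincides, as an inner Lipschitz germ, with the germ of $X$ at $p$. This identifies all finite local invariants of $X$ with those of $\widehat{X}$ away from $e_{n+1}$. The main technical step is the complementary one: to show that the germ $(\widehat{X},e_{n+1})$ records exactly the inner Lipschitz geometry of $X$ at infinity. For $x\in\R^n$ with $\|x\|$ large, $\|\sigma^{-1}(x)-e_{n+1}\|$ behaves like $2/\|x\|$, and $\sigma^{-1}$ multiplies infinitesimal Euclidean lengths by a factor comparable to $2/(1+\|x\|^2)$. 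Therefore, after reparametrizing the distance to $e_{n+1}$ by $r\mapsto 2/r$, the inner metric on small neighborhoods of $e_{n+1}$ in $\widehat{X}$ and the inner metric on complements of large balls in $X$ correspond up to a bi-Lipschitz homeomorphism of germs (in the pointed-at-infinity sense). This is where the assumption of isolated inner Lipschitz singularities is used crucially: it guarantees that the ``end'' of $X$ has a well-defined inner Lipschitz germ and that the distortion caused by $\sigma^{-1}$ does not create new singularities at $e_{n+1}$.

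Having established this correspondence, the forward direction becomes immediate: an inner lipeomorphism $f\colon X\to Y$ carries finite local types to finite local types and the type at infinity to the type at infinity, hence its transport under $\sigma^{-1}$, extended by $e_{n+1}\mapsto e_{m+1}$, is a pointed inner lipeomorphism $(\widehat{X},e_{n+1})\to(\widehat{Y},e_{m+1})$ once one verifies continuity and the Lipschitz constants at the added point, which follow from the germ correspondence just discussed. Conversely, a pointed inner lipeomorphism of the compactifications restricts to a pointed inner lipeomorphism of germs at $e_{n+1}$, matches the remaining local invariants by the identification above, and so forces the classification invariants of $X$ and $Y$ to agree, yielding an inner lipeomorphism $X\to Y$. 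The hard part of the whole argument is the germ correspondence at $e_{n+1}$: it requires controlling the conformal factor of stereographic projection well enough to translate ``inner metric at infinity'' into ``inner metric at a point'', and checking that this translation is compatible with the canonical-form decomposition provided by the classification.
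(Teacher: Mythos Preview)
Your approach is exactly the one the paper attributes to \cite{FernandesS:2022}: the introduction says this theorem was obtained there ``as a consequence'' of the global inner bi-Lipschitz classification of semi-algebraic surfaces (in the spirit of \cite{Birbrair}), and your plan---read off classification invariants on both sides and check that stereographic compactification merely exchanges the invariant at infinity for a local invariant at $e_{n+1}$---is precisely that argument. So your proposal is correct and matches the original proof.

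It is worth noting, however, that the present paper does not reproduce that proof; instead it supersedes the theorem by the dimension-independent Theorem~\ref{cor:compactification_eq_two} and Corollary~\ref{cor:global_compactification_eq_two}, whose method is quite different. Rather than classifying surfaces, the paper reduces everything to the outer case via the Lipschitz normal embedding theorem (Theorem~\ref{thm:Lip_normal_embeddins}) and pancake decompositions, and then uses the direct computation of Theorem~\ref{thm:charac_outer_geo_lip} (that conjugation by the inversion $\iota$ preserves outer lipeomorphisms) together with a Rademacher-type argument to pass between inner lipeomorphisms at infinity and at a point. The classification route you propose is conceptually transparent but confined to two-dimensional sets with isolated singularities; the paper's route is more technical but works for arbitrary connected definable sets and both metrics $\sigma,\tilde\sigma\in\{inn,out\}$ simultaneously. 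One small caution in your write-up: when you transport $f$ through $\sigma^{-1}$, you implicitly use that an inner lipeomorphism between closed sets is proper (so ends go to ends); this is true but should be stated.
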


Several of the relations between the local and the global Lipschitz geometry of singularities, which are presented here, are direct consequences of the following result, which is the main result of this article:

\begin{customthm}{\ref*{main_theorem}}
Let $A\subset \R^n$ be a definable set in an o-minimal structure $\mathcal{S}$.
\begin{itemize} 
 \item [(a)] If the link of $A$ at infinity is connected, then there are constants $K,r\geq 1$ such that for each $t\in (r,+\infty)$, we have
$$
d_{A,inn}(x,y)\leq d_{A_{\varphi,t},inn}(x,y)\leq K d_{A,inn} (x,y),
$$
for all $x,y\in A_{\varphi,t}$. 
\item [(b)] If the link of $A$ at $0$ is connected, then there are constants $K,r\geq 1$ such that for each $t\in (0,\frac{1}{r})$, we have
$$
d_{A,inn}(x,y)\leq d_{A_{\varphi,t},inn}(x,y)\leq K d_{A,inn} (x,y),
$$
for all $x,y\in A_{\varphi,t}$. 
\end{itemize}
\end{customthm}
The set $A_{\varphi,t}$ above is the set $ A_{\varphi,t}=\{x\in A; \varphi(x)=t\}$ and $\varphi\colon A\to \R$ is a Lipschitz and definable function in $\mathcal{S}$ such that there is a constant $C\geq 1$ satisfying $\frac{1}{C}\|x\|\leq \varphi(x)\leq C\|x\|$ for all $x\in A$. Such a function is called a {\bf radius function for $A$}.

A set $X\subset \R^n$ is link Lipschitz normally embedded (LLNE) at infinity, if there is a constant $C\geq 1$ such that $d_{X_t, inn}\leq C\|\cdot \|$, for all large enough $t>0$, where $X_t:=X\cap \mathbb{S}_t^{n-1}(0)$. This notion was introduced in \cite{FernandesS:2022b} and a priori it depended on the centre of the ball. 
So, we had the following question: 
\begin{question}\label{question:dependency_centre}
If $X\subset \R^n$ is LLNE at infinity and $p\in \R^n\setminus\{0\}$, is there a constant $C\geq 1$ such that $X\cap \mathbb{S}_t^{n-1}(p)$ is $C$-LNE, for all large enough $t>0$?
\end{question}

In Section  \ref{sec:char_lne}, consequences of Theorem \ref{main_theorem}. In particular, we obtain a positive answer to the above question (see Corollary \ref{cor:lne_llne_equiv}). 
Another consequence is the following characterization of Lipschitz normally embeddedness at infinity (see definitions of LNE sets in Subsection \ref{subsec:lne_thm} and Section \ref{sec:char_lne}):
A definable set $A\subset \R^n$ with connected link at infinity is LNE at infinity if and only if $A$ is LLNE at infinity (see Corollary \ref{cor:lne_llne_equiv}).
In particular, we recover the main results proved in \cite{mendessampaio} and \cite{Nhan:2023}. 
We recover also the results proved in \cite{CostaGM:2023}, which are essentially included in Corollaries \ref{cor:lne_llne_equiv} and \ref{cor:charac_LNE_infinity} in the particular case of closed subsets of $\R^n$.

In Section \ref{sec:key_results}, we prove two useful results. For instance, in Subsection \ref{subsec:lne_thm}, we prove the Lipschitz normal embedding theorem, which states that any connected definable set is definably inner lipeomorphic to an LNE set (see definition of lipeomorphism in Subsection \ref{subsec:lne_thm}). This result was proved in \cite{BirbrairM:2000} in the case that the set was compact and semi-algebraic. We show here that the same proof of  \cite{BirbrairM:2000} works, with small changes, in our context. And in Subsection \ref{subsec:arc-criterion_Lip}, for definable sets $X\subset\R^n$ and $Y\subset\R^m$ and $\sigma,\tilde\sigma\in \{inn,out\}$, we present a criterion for a definable mapping $\varphi\colon (X,d_{X,\sigma})\to (Y,d_{X,\tilde\sigma})$ to be Lipschitz. 

In Section \ref{sec:outer_geo}, we prove that two sets in Euclidean spaces, not necessarily definable in an o-minimal structure, are outer lipeomorphic if and only if their stereographic modifications are outer lipeomorphic if and only if their inversions are outer lipeomorphic. 
(see Theorem \ref{thm:charac_outer_geo_lip}).

In Section \ref{sec:out_inn_geo}, we obtain also a generalization of  Theorem \ref{cor:compactification_eq}. For instance, for connected definable sets $X\subset\R^n$ and $Y\subset\R^m$ and $\sigma,\tilde\sigma\in \{inn,out\}$, we prove that $(X,d_{X,\sigma})$ and $(Y,d_{X,\tilde\sigma})$ are lipeomorphic if and only if the pointed stereographic modifications $(\widehat{X},d_{\widehat{X},\sigma},\infty)$ and $(\widehat{Y},d_{\widehat{Y},\tilde\sigma},\infty)$ are lipeomorphic (see Corollary \ref{cor:global_compactification_eq_two}). In particular, we obtain that the definable (inner or outer) Lipschitz geometry of definable sets is the same in the local and in the global cases.

\bigskip

\noindent {\bf Notation:} 
\begin{itemize}
 \item $\|(x_1,...,x_n)\|=(x_1^2+...+x_n^2)^{\frac{1}{2}}$;
 \item $\mathbb{S}_r^{n-1}(p)=\{x\in \R^n; \|x-p\|=r\}$, $ \mathbb{S}_r^{n-1}=\mathbb{S}_r^{n-1}(0)$ and $ \mathbb{S}^{n-1}=\mathbb{S}_1^{n-1}(0)$;
 \item $B_r^{n}(p)=\{x\in \R^n; \|x-p\|<r\}$;
 \item Let $f,g\colon (a,+\infty)\to (0,+\infty)$ be functions. We write $f\lesssim g$ if there is a constant $f(t)\leq C g(t)$ for all $t\in (a,+\infty)$. We write $f\approx g$ if $f\lesssim g$ and $g\lesssim f$. We write $f\ll g$ if $\lim\limits_{t\to +\infty} \frac{f(t)}{g(t)}=0$;
\item We fix an o-minimal structure $\mathcal{S}$ on $\R$ (see more about o-minimal structures in \cite{Coste:1999} and \cite{Dries:1998}). So, a definable set or a definable function means definable in $\mathcal{S}$.  
  
\end{itemize}

\section{Main result}

Given a path connected subset $X\subset\R^n$, the
{\bf inner distance} on $X$ is defined as follows: given two points $x_1,x_2\in X$, $d_{X,inn}(x_1,x_2)$  is the infimum of the lengths of paths on $X$ connecting $x_1$ to $x_2$. We denote by $d_{X,out}$ the Euclidean distance of $\R^n$ restricted to $X$.

Let $A\subset \mathbb{R}^n$ be a connected definable set in $\mathcal{S}$. By the Pancake Decomposition Theorem \cite[Theorem 1.3]{KurdykaP:2006}, there are $M\geq 1$ and a partition into a definable and finite union $A=\bigcup_{i\in I}{\mathcal{B}_i}$ such that every subset $\mathcal{B}_{i}$ is $M$-LNE. For each $i$, let $X_i$ be the closure of $\mathcal{B}_i$ in $A$. We consider $x,y\in A$ and for each $r$ and $k$ we define  
$$\widetilde{\Delta}_r(x,y):=\inf\left\{\sum_{i=0}^{r-1}{\|x_{i+1}-x_i\|};x_0=x, x_r=y, x_i,x_{i+1}\in X_{\nu_i}, 0\leq i\leq r-1\right\}.$$

$$\Delta_k(x,y):=\inf\left\{\widetilde{\Delta}_r(x,y);r=1,\cdots,k\right\}.$$
We also define $\inf \emptyset=+\infty$. It is clear that every $\Delta_k$ is definable in $\mathcal{S}$. Finally, we define 
$$d_{A,P}(x,y):=\inf\left\{\Delta_k(x,y);k\in \mathbb{N} \right\}.$$ 

The proof of the next result is an adaptation of Lemma 4 and Theorem 1 in \cite{KurdykaO:1997}.

\begin{proposition}\label{prop:pancake_distance}
	The function $d_{A,P}\colon A\times A\rightarrow \mathbb{R}$ is definable in $\mathcal{S}$, defines a distance in $A$ and 
	$$d_{A,P}(x,y)\leq d_{A,in}(x,y)\leq M d_{A,P}(x,y),$$
for all $x,y\in A$.
\end{proposition}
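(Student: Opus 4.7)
The plan is to verify each of the three assertions in turn, keying on the structural observation that the infimum defining $d_{A,P}$ is always realized (up to $\varepsilon$) on chains whose pancake sequence has no repetitions, so that in fact $d_{A,P}=\Delta_N$ for $N=|I|$. Indeed, if a chain $x_0,\ldots,x_r$ with $x_i,x_{i+1}\in X_{\nu_i}$ satisfies $\nu_i=\nu_j$ for some $i<j$, then $x_i$ and $x_{j+1}$ both lie in $X_{\nu_i}$, so replacing the block $x_i,x_{i+1},\ldots,x_{j+1}$ by the single edge $(x_i,x_{j+1})$ inside $X_{\nu_i}$ produces a legal chain of no greater cost by the Euclidean triangle inequality. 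Iterating yields a chain whose pancake sequence is injective, hence of length at most $N$; this gives $d_{A,P}=\Delta_N$, and definability follows from the finiteness and definability of each $\widetilde{\Delta}_r$.

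For the distance axioms, symmetry is immediate (reverse a chain), the triangle inequality comes from concatenating an $x$-to-$z$ chain with a $z$-to-$y$ chain, and the ordinary Euclidean triangle inequality applied to a chain yields $\sum\|x_{i+1}-x_i\|\ge\|x-y\|$, whence $d_{A,P}(x,y)\ge\|x-y\|$ and in particular $d_{A,P}(x,y)=0\Rightarrow x=y$. Finiteness of $d_{A,P}$ everywhere on $A\times A$ follows from connectedness of $A$: the $X_i$ are closed in $A$, so connectedness forces the intersection graph of the cover $\{X_i\}$ to be connected, and consecutive intersection points give an admissible chain linking any two prescribed endpoints.

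For the two-sided comparison with $d_{A,inn}$, the upper bound $d_{A,inn}\le M\,d_{A,P}$ is the easy direction: the $M$-LNE property of each $\mathcal{B}_i$ passes to $X_i$ by approximation, so a legal chain can be thickened to an actual path in $A$ via inner geodesics inside each pancake, whose total length is at most $M\sum\|x_{i+1}-x_i\|$. For the reverse bound $d_{A,P}\le d_{A,inn}$, one takes an almost-optimal rectifiable path $\gamma\colon[0,L]\to A$ of length $L$ and seeks a partition $0=t_0<\cdots<t_r=L$ with $\gamma(t_i),\gamma(t_{i+1})\in X_{\nu_i}$ for each $i$, because then $\sum\|\gamma(t_{i+1})-\gamma(t_i)\|\le L$ is exactly an admissible value of $\widetilde{\Delta}_r(x,y)$. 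Producing this partition is the main obstacle: a crude Lebesgue-number argument on the closed cover $\{\gamma^{-1}(X_i)\}$ of $[0,L]$ can fail in general, so I would first reduce to piecewise-definable paths — legitimate because the infimum defining $d_{A,inn}$ is attained in the o-minimal setting by such curves — and then exploit that $\gamma^{-1}(X_i)$ becomes a definable subset of $[0,L]$, hence a finite union of intervals, which produces the partition automatically. This is precisely where the argument of \cite{KurdykaO:1997} is adapted.
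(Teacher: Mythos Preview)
Your plan is correct and is precisely the Kurdyka--Orro argument that the paper invokes; the paper gives no independent proof of this proposition, only the reference to \cite{KurdykaO:1997}. The chain-shortening reduction yielding $d_{A,P}=\Delta_{|I|}$ (hence definability), the metric axioms via concatenation and the Euclidean triangle inequality, and the two comparison inequalities obtained from the $M$-LNE property of the pancakes on one side and from partitioning a definable path on the other, are exactly the steps in that reference.
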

The distance $d_{A,P}$ is called a {\bf pancake distance of $A$}.

Now, we are ready to state and prove our main result.

\begin{theorem}\label{main_theorem}
Let $A\subset \R^n$ be a definable set in an o-minimal structure $\mathcal{S}$. Let $\varphi\colon A\to \R$ be a radius function for $A$ and $A_{\varphi,t}=\{x\in A;\varphi(x)=t\}$ for each $t>0$.
\begin{itemize} 
 \item [(a)] If the link of $A$ at infinity is connected, then there are constants $K,r\geq 1$ such that for each $t\in (r,+\infty)$, we have
$$
d_{A,inn}(x,y)\leq d_{A_{\varphi,t},inn}(x,y)\leq K d_{A,inn} (x,y),
$$
for all $x,y\in A_{\varphi,t}$. 
\item [(b)] If the link of $A$ at $0$ is connected, then there are constants $K,r\geq 1$ such that for each $t\in (0,\frac{1}{r})$, we have
$$
d_{A,inn}(x,y)\leq d_{A_{\varphi,t},inn}(x,y)\leq K d_{A,inn} (x,y),
$$
for all $x,y\in A_{\varphi,t}$. 
\end{itemize}
\end{theorem}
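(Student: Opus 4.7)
The inequality $d_{A,inn}(x,y)\leq d_{A_{\varphi,t},inn}(x,y)$ is immediate, since any rectifiable path inside $A_{\varphi,t}$ is also a rectifiable path inside $A$. The entire content of the theorem lies in the reverse inequality, and this is what I focus on. I carry out the plan for part~(a) only; part~(b) is analogous, the roles of $0$ and $\infty$ being exchanged by the inversion $x\mapsto x/\|x\|^{2}$. By Proposition~\ref{prop:pancake_distance} applied to a fixed pancake decomposition $A=\bigcup_{i\in I}\mathcal{B}_{i}$ (with each $\mathcal{B}_{i}$ being $M$-LNE and $X_{i}$ the closure of $\mathcal{B}_{i}$ in $A$), it suffices to produce constants $K'$ and $r$ such that
$$
d_{A_{\varphi,t},inn}(x,y)\leq K'\, d_{A,P}(x,y)
$$
for all $t>r$ and all $x,y\in A_{\varphi,t}$, because $d_{A,P}$ and $d_{A,inn}$ are $M$-equivalent.

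The heart of the argument is a \emph{level-set sweeping lemma} inside a single pancake: for each index $i$ and all sufficiently large, comparable parameters $s$ and $t$, one should produce a definable map $\Psi_{i,s,t}\colon X_{i}\cap A_{\varphi,s}\to X_{i}\cap A_{\varphi,t}$ that is Lipschitz with constant independent of $s$ and $t$, and which moves each point by at most a uniform constant times $|t-s|$. I would build $\Psi_{i,s,t}$ from the flow of a definable vector field on $X_{i}$ whose inner product with $\nabla\varphi$ is bounded away from zero. Refining the cell decomposition so that $\varphi$ is $C^{1}$ on each open cell of $X_{i}$, definable choice yields such a field, and the LNE property of $X_{i}$ together with the radius-function hypothesis $\frac{1}{C}\|x\|\leq\varphi(x)\leq C\|x\|$ lets one regularize it into a Lipschitz field whose horizontal distortion is bounded uniformly for large comparable $s,t$; moving from level $s$ to level $t$ then costs arclength of order $|t-s|$, comparable to the radial scale.

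Once the sweeping maps are available, the upper bound is assembled as follows. Given $x,y\in A_{\varphi,t}$ and a near-optimal pancake chain $x=x_{0},x_{1},\ldots,x_{r}=y$ for $d_{A,P}(x,y)$ with $x_{i},x_{i+1}\in X_{\nu_{i}}$, apply the sweeping maps inside each pancake on the chain to retract each $x_{i}$ onto level $t$; by the $M$-LNE property of $X_{\nu_{i}}$ and the Lipschitz control on the sweeping, the retracted pair inside $X_{\nu_{i}}\cap A_{\varphi,t}$ is joinable by a path of length at most a uniform constant times $\|x_{i}-x_{i+1}\|$. The hypothesis that the link of $A$ at infinity is connected enters crucially here: it ensures that for all sufficiently large $t$ the slice $A_{\varphi,t}$ is connected, and that the transitions between the two retractions of $x_{i}$ coming from the two pancakes on either side can be absorbed in $A_{\varphi,t}$ at bounded cost. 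Without connectedness, two pancakes meeting at some level $s$ could land in distinct components of $A_{\varphi,t}$, and the estimate would collapse. Summing over the chain yields $d_{A_{\varphi,t},inn}(x,y)\leq K\widetilde{\Delta}_{r}(x,y)$, and taking infima gives the bound. The principal obstacle is the level-set sweeping lemma: it is a uniform bi-Lipschitz triviality of $\varphi$ on each pancake near infinity, and it is where definability, the $M$-LNE property of the pancakes, and the radius-function hypothesis must be combined.
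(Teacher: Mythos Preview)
Your approach differs substantially from the paper's, and the central step --- the ``level-set sweeping lemma'' --- is where the gap lies.

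The paper does not take a pancake decomposition of $A$. It decomposes the graph $\mathcal{A}=\mathrm{Graph}(\varphi)\subset\R^{n}\times\R$ into pieces whose \emph{horizontal slices} $\mathcal{B}_i\cap(\R^n\times\{t\})$ are uniformly $C$-LNE for every $t$; projecting down, each $X_i\cap A_{\varphi,t}$ is uniformly LNE, so one obtains a pancake decomposition of every slice $A_{\varphi,t}$ with the \emph{same} constant, and no sweeping map is ever needed. The comparison of $d_{A_{\varphi,t},P}$ with $d_{A,P}$ is then carried out by contradiction: if the ratio is unbounded, curve selection at infinity produces definable arcs $\gamma_1,\gamma_2$ with $\varphi(\gamma_i(t))=t$ along which it blows up; a near-optimal pancake chain for $d_{A,P}(\gamma_1(t),\gamma_2(t))$ is realized by finitely many definable arcs $\tilde\beta_i$, these are reparametrized onto level $t$, and an elementary isosceles inequality (Proposition~\ref{isosceles_property}) shows the reparametrized chain still has total length $\lesssim d_{A,P}$, which bounds $d_{A_{\varphi,t},P}$ from above and yields the contradiction. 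No vector field or flow appears anywhere.

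Your sweeping lemma, by contrast, is uniform bi-Lipschitz triviality of $\varphi|_{X_i}$ at infinity, and you do not prove it --- you yourself flag it as the principal obstacle. The pancakes $X_i$ are closures of cells and need not be smooth, so producing a definable Lipschitz field transverse to the levels and controlling its flow with uniform constants is genuinely hard; in fact the statement ``$X_i$ LNE at infinity $\Rightarrow$ $X_i\cap A_{\varphi,t}$ uniformly LNE'' is precisely Corollary~\ref{cor:lne_llne_equiv}, which the paper \emph{derives from} Theorem~\ref{main_theorem}, so appealing to it here would be circular. There is a second circularity in your chain assembly: each interior vertex $x_i$ is retracted to level $t$ once via $X_{\nu_{i-1}}$ and once via $X_{\nu_i}$, giving two possibly distinct points $x_i',x_i''\in A_{\varphi,t}$; connecting them by a path of length comparable to $\|x_i'-x_i''\|$ \emph{inside} $A_{\varphi,t}$ is again an LNE-type bound on the slice, exactly what you are trying to establish. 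The paper's route via the graph decomposition and the arc-based contradiction argument sidesteps both difficulties entirely.
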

\begin{proof}
Let us prove Item (a).

%

Let $\mathcal{A}=Graph(\varphi)=\{(x,t)\in \R^n\times \R; x\in A $ and $\varphi(x)=t\}$. We have that $\mathcal{A}$ is definable in $\mathcal{S}$ and by the Pancake Decomposition Theorem \cite[Theorem 1.3]{KurdykaP:2006}, there are a constant $C\geq 1$ and a finite definable partition $\mathcal{A}=\bigcup\limits_{i=1}^k \mathcal{B}_i$ such that for each $i\in \{1,...,k\}$, $\mathcal{B}_i\cap ( \R^n\times \{t\})$ are $C$-LNE for all $t\in \R$. 

We have that the projection $\pi\colon \R^n\times \R\to \R^n$ satisfies the following: $\pi|_{\mathcal{A}}\colon \mathcal{A}\to A$ is an outer lipeomorphism and $\pi(\mathcal{A}\cap (\R^n\times \{t\}))=A_{\varphi,t}:=\{x\in A;\varphi(x)=t\}$.
For each $i\in \{1,...,k\}$, let $X_i$ be the closure of $B_i=\pi(\mathcal{B}_i)$ in $A$. Thus $A=\bigcup\limits_{i=1}^k X_i$ and each $X_i$ is a definable $C$-LLNE set w.r.t. $\varphi$. Note that each $X_i$ is also LNE at infinity. 

For each $t$, by Proposition \ref{prop:pancake_distance}, there exists a pancake distance of $A_{\varphi,t}:=\{x\in A;\varphi(x)=t\}= \bigcup\limits_{i=1}^k (X_i\cap A_{\varphi,t})$, denoted by $d_{A_{\varphi,t},P}$, which is definable in $\mathcal{S}$ and 
$$d_{A_{\varphi,t},P}(x,y)\leq d_{A_{\varphi,t},inn}(x,y)\leq C d_{A_{\varphi,t},P}(x,y),$$
for all $x,y\in A_{\varphi,t}$.

By the above discussion, it is enough to show that there are constants $K,r\geq 1$ such that for each $t\in (r,+\infty)$, we have
$$
d_{A_{\varphi,t},P}(x,y)\leq K d_{A,P} (x,y),
$$
for all $x,y\in A_{\varphi,t}$. 

The set $X=\phi(Y)$ is a definable set in $\mathcal{S}$, where $Y=\{(x,y,t)\in A\times A\times \R; \varphi(x)=\varphi(y)=t\}$ and $\phi\colon Y\to \R^2$ is given by $\phi(x,y, t)=(d_{A,P} (x,y),d_{A_{\varphi,t},P}(x,y))$.

Assume that Item (a) does not hold.
This implies that $(0,1)\in C(X,\infty)$, where $C(X,\infty)$ is the set of all the points $v\in \R^n$ such that there are sequences $\{ t_j \}_{j\in \N}\in (0,+\infty)$ and $\{x_j\}_{j\in \N}\subset X$ satisfying $\lim\limits_{j\to +\infty} t_j=+\infty$ and $\lim\limits_{j\to +\infty }\frac{1}{t_j}x_j=v.$

By following the proof of Proposition 2.15 in \cite{FernandesS:2020} and using the o-minimal versions of the Curve Selection Lemma (see \cite[Theorem 3.2]{Coste:1999}) and the Monotonicity Theorem (see \cite[Theorem 2.1]{Coste:1999}), we have the following characterization:
\begin{proposition}\label{prop:selection_lemma}
Let $Z\subset \R^n$ be an unbounded definable set in $\mathcal{S}$. A vector $w\in\R^n$ is a tangent vector of $Z$ at infinity if and only if there exists a continuous curve $\gamma\colon (\varepsilon ,+\infty )\to Z$, which is definable in $\mathcal{S}$,  such that $\lim\limits _{t\to +\infty }\|\gamma(t)\|=+\infty $ and $\gamma(t)=tw+o_{\infty }(t),$ where $g(t)=o_{\infty }(t)$ means $\lim\limits _{t\to +\infty }\frac{g(t)}{t}=0$. 
\end{proposition}

Thus, by Proposition \ref{prop:selection_lemma}, there exists a continuous arc $
\gamma\colon (\varepsilon,+\infty )\to A$ that is definable in $\mathcal{S}$ and such that $\lim\limits _{t\to +\infty }\|\gamma(t)\|=+\infty$ and $\gamma(t)=t(0,1)+o_{\infty }(t)$.
Let $
\gamma_1,\gamma_2\colon (\varepsilon,+\infty )\to A$ be continuous definable arcs such that $(\gamma_1(t),\gamma_2(t),t)\in Y$ and  $\phi(\gamma_1(t),\gamma_2(t),t)=\gamma(t)$ for all $t\in (\varepsilon,+\infty )$. Therefore,
$$
\lim\limits_{t\to +\infty}\frac{d_{A_{\varphi,t},P}(\gamma_1(t),\gamma_2(t))}{d_{A,P}(\gamma_1(t),\gamma_2(t))}=+\infty.
$$

By definition of the metric $d_{A,P}$, by taking a subsequence, if necessary, we may assume that there are natural numbers $r\in \mathbb{N}$ and $k_0,...,k_r\in \{1,...,k\}$, and a sequence $\{(x_0^j,x_1^j,...,x_r^j)\}_j\subset A^{r+1}$ such that for each $i\in\{0,...,r\}$  $x_i^j,x_{i+1}^j\in X_{k_i}$, where $x_0^j=\gamma_1(t_j)$, $x_{r+1}^j=\gamma_2(t_j)$ and $t_j=\varphi(x_0^j)=\varphi(x_{r+1}^j)$, for all $j$, and

$$
\frac{1}{C}\sum\limits_{i=0}^r \|x_i^j-x_{i+1}^j\|\leq d_{A,P}(x_0^j,x_{r+1}^j)\leq C\sum\limits_{i=0}^r \|x_i^j-x_{i+1}^j\|.
$$
By increasing $C$, if necessary, we may assume that, for each $i\in \{1,...,r\}$, $\|x_i^j\|\geq \frac{1}{1+C}\|x_0^j\|$, for all $j$. 
By Curve Selection Lemma (at infinity), we can choose a finite number of definable arcs $\tilde \beta_0, \tilde \beta_1, \ldots, \tilde \beta_{r+1}\colon (\epsilon, +\infty)\to A$ such that  $\lim\limits_{t\to +\infty}\|\tilde{\beta}_i(t)\|=+\infty$ for all $i\in \{1,\ldots,r\}$,
\begin{eqnarray*}
d(t)&:=&d_{A,P}(\gamma_1(t),\gamma_2(t))\\
    & \approx& \|\tilde \beta_0(t)-\tilde{\beta}_1(t)\|+\|\tilde \beta_1(t)-\tilde{\beta}_2(t)\|+\ldots+\|\tilde \beta_r(t)-\tilde \beta_{r+1}(t)\|=:\tilde d(t)
\end{eqnarray*}
and for each $i\in\{0,...,r\}$ the image of each pair $\tilde \beta_i$ and  $\tilde \beta_{i+1}$ is contained in some $X_{k_i}$, where $\tilde \beta_0=\gamma_1$ and $\tilde \beta_{r+1}=\gamma_2$.

For each $i\in\{0,1,\ldots,r+1\}$, let $\beta_i$ be the parametrization of $\tilde \beta_i$ such that $\varphi(\beta_i(t))=t$ for all large enough $t>0$ and let $h\colon (\epsilon,+\infty)\to \R$ be the function given by 
$$
h(t)=d_{A_t,P}(\gamma_1(t),\beta_1(t))+d_{A_t,P}(\beta_1(t),\beta_2(t))+ \ldots +d_{A_t,P}(\beta_r(t),\gamma_2(t)).
$$

Since $d_{A_{\varphi,t}, P}(\gamma_1(t),\gamma_2(t)) \leq h(t)$ for all $t$, then $d(t)\ll h(t)$. 

Let $C\geq 1$ be a constant such that $\frac{1}{C}\|x\|\leq\|\varphi(x)\|\leq C\|x\|$ for all $x\in A$. By increasing $C$, if necessary, we may assume that $X_i$ and $X_{i,\varphi,t}$ are $C$-LNE. Clearly, $d_{A_{\varphi,t}, P}(\gamma_1(t),\gamma_2(t)) \leq h(t)\leq 2(r+1)Ct$.  Then, for each $i$, $\|\tilde \beta_i(t)\|\approx t$.

\begin{proposition}[Isosceles property at infinity]\label{isosceles_property}
Let $\gamma_1,\gamma_2,\gamma_3\colon (r,+\infty)\to\mathbb{R}^n $ be arcs such that $\lim\limits_{t\to +\infty}\|\gamma_i(t)\|=+\infty$ for any $i\in\{1,2,3\}$. Assume that $\|\gamma_1-\gamma_2\|\lesssim \|\gamma_1-\gamma_3\|\lesssim \|\gamma_2-\gamma_3\|$. Then $\|\gamma_1-\gamma_3\|\approx \|\gamma_2-\gamma_3\|$.
\end{proposition}
\begin{proof}
We only have to show that $\|\gamma_2-\gamma_3\|\lesssim \|\gamma_1-\gamma_3\|$.
\begin{eqnarray*}
 \|\gamma_2-\gamma_3\|&\leq& \|\gamma_2-\gamma_1\|+\|\gamma_1-\gamma_3\|\\
            & \lesssim &  \|\gamma_1-\gamma_3\|+\|\gamma_1-\gamma_3\|\\
            & \lesssim & \|\gamma_1-\gamma_3\|.
\end{eqnarray*}
\end{proof}

Then, by Proposition \ref{isosceles_property},  $\|\beta_i(t)- \beta_{i+1}(t)\| \lesssim\|\tilde\beta_i(t)- \tilde\beta_{i+1}(t)\| $.
Therefore $h(t) \lesssim d(t)$, which is a contradiction with $d(t)\ll h(t)$.

Therefore Item (a) holds.

Similarly, we prove Item (b).
\end{proof}

\section{Some key results}\label{sec:key_results}
\subsection{Lipschitz normally embedding theorem}\label{subsec:lne_thm}

In this Subsection, we prove that any connected and definable set $X$ is definably inner lipeomorphic to a definable set that is LNE. This result was proved in \cite{BirbrairM:2000} in the case that $X$ was a compact and semi-algebraic set. We show here that the same proof of  \cite{BirbrairM:2000} works, with small changes, in our context.

\begin{definition}\label{general-lipschitz function}
Let $X\subset\R^n$ and $Y\subset\R^m$. Let $d_X$ and $d_Y$ be distances on $X$ and $Y$, respectively. A mapping $f\colon (X,d_X)\rightarrow (Y,d_Y)$ is called {\bf Lipschitz} if there exists $\lambda >0$ such that $d_{Y}(f(x_1),f(x_2))\le \lambda d_{X}(x_1,x_2))$  for all $x_1,x_2\in X$. In this case, $f$ is also called {\bf $\lambda$-Lipschitz}. A Lipschitz mapping $f\colon (X,d_X)\rightarrow (Y,d_Y)$ is called a {\bf lipeomorphism} if its inverse mapping exists and is Lipschitz  and, in this case, we say that  $(X,d_X)$ and $(Y,d_Y)$ are {\bf lipeomorphic}. We say that $f\colon (X,d_X)\rightarrow (Y,d_Y)$ is a {\bf lipeomorphism at infinity} if there are compact subsets $K$ and $\tilde K$ such that  $f\colon (X\setminus K,d_X|_{X\setminus K})\rightarrow (Y\setminus \tilde K,d_Y|_{Y\setminus \tilde K})$ is a lipeomorphism. In this case, we say that $(X,d_X)$ and $(Y,d_Y)$ are {\bf lipeomorphic at infinity}.
\end{definition}

If $d_X$ and $d_Y$ are the outer (resp. inner) distances, we add also the word ``outer'' (resp. ``inner'') in the above definitions.

When we say that two sets are {\bf definably lipeomorphic} means that there is a lipeomorphism between these two sets that is definable.

\begin{definition}[See \cite{BirbrairM:2000}]\label{def:lne}
Let $X\subset\R^n$ be a subset. We say that $X$ is {\bf Lipschitz normally embedded (LNE)} if there exists a constant $c\geq 1$ such that $d_{X,inn}(x_1,x_2)\leq C\|x_1-x_2\|$, for all pair of points $x_1,x_2\in X$.  We say that $X$ is {\bf Lipschitz normally embedded set at $p$} (shortly {\bf LNE at $p$}), if there is a neighbourhood $U$ such that $p\in U$ and $X\cap U$ is an LNE set or, equivalently, that the germ $(X,p)$ is LNE. In this case, we say also that $X$ is {\bf $C$-LNE} (resp. {\bf $C$-LNE at $p$}). We say that $X$ is {\bf Lipschitz normally embedded set at infinity} (shortly {\bf LNE at infinity}), if there is a compact subset $K$ such that $X\setminus K$ is an LNE set. In this case, we say also that $X$ is {\bf $C$-LNE at infinity}. 
\end{definition}

\begin{theorem}[Lipschitz normal embedding]
\label{thm:Lip_normal_embeddins}
Let $X\subset\R^n$ be a connected definable set in $\mathcal{S}$. Then there is a definable set $\widetilde{X}$ that is LNE and definably inner lipeomorphic to $X$.
\end{theorem}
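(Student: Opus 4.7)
My plan is to follow the Birbrair--Mostowski argument from \cite{BirbrairM:2000} almost verbatim, verifying that every tool used is available in the o-minimal setting and making the small adjustments needed to handle the possibly non-compact case. The starting point is the Pancake Decomposition Theorem \cite[Theorem 1.3]{KurdykaP:2006}, which produces a finite definable decomposition $X=\bigcup_{i=1}^k X_i$ with each closed piece $X_i$ being $M$-LNE for some uniform constant $M\ge 1$. By Proposition \ref{prop:pancake_distance}, the pancake distance $d_{X,P}$ is $M$-bi-Lipschitz equivalent to $d_{X,inn}$, so it suffices to construct a definable map $F\colon X\to\R^N$ whose image $\widetilde X:=F(X)$ is LNE and that induces a bi-Lipschitz equivalence between $(X,d_{X,P})$ and $(\widetilde X,\|\cdot\|)$.

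The idea is to use extra coordinates in the product $\R^n\times\R^k$ to pull the pancakes apart while preserving their identifications on intersections. Following \cite{BirbrairM:2000}, I would fix a sufficiently large constant $L>0$ and a definable ``partition-of-unity'' type map $\rho=(\rho_1,\ldots,\rho_k)\colon X\to\R^k$ built from normalized distance functions to the definable sets $X\setminus X_i$, such that the component $\rho_i$ is supported in $X_i$ and the values $\rho(x)$ are well-separated for $x$'s belonging to disjoint families of pancakes. One then sets $F(x)=(x,L\rho(x))$ and $\widetilde X:=F(X)$.

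To finish I would verify two estimates. For $x,y$ in a common pancake $X_i$, $\|F(x)-F(y)\|$ is comparable to $\|x-y\|$, and the $M$-LNE property of $X_i$ produces a short path in $F(X_i)\subset\widetilde X$ joining $F(x)$ to $F(y)$. For $x,y$ in distinct pancakes, chaining this bound along a pancake representation $x=x_0,x_1,\ldots,x_{r+1}=y$ yields $d_{\widetilde X,inn}(F(x),F(y))\lesssim d_{X,P}(x,y)$, while the separation of the $L\rho$-coordinates (for $L$ chosen large enough) gives the matching lower bound $\|F(x)-F(y)\|\gtrsim d_{X,P}(x,y)$. Combining these shows that $\widetilde X$ is LNE and that $F$ is a definable inner lipeomorphism.

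The main obstacle --- and the reason this is not a mere citation of \cite{BirbrairM:2000} --- is that in the non-compact setting of the theorem the Lipschitz constants in these estimates must be controlled globally, not just on compact pieces. This uniformity follows because the LNE constant $M$ produced by the Pancake Decomposition Theorem is a single constant valid on all of $X$, and the distance functions ${\rm dist}(\cdot,X\setminus X_i)$ are $1$-Lipschitz globally on $X$; thus the Lipschitz constants of $F$ and $F^{-1}$ depend only on $k$, $M$, and $L$. Definability of $F$ is immediate from closure of $\mathcal{S}$ under distance functions to definable sets.
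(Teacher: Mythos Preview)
Your construction has a genuine gap: the extra coordinates you propose are built from the \emph{outer} distance functions ${\rm dist}(\cdot,X\setminus X_i)$, and you even note that these are $1$-Lipschitz on $X$. But then each $\rho_i$ is $1$-Lipschitz for the outer metric, so $F(x)=(x,L\rho(x))$ is an outer bi-Lipschitz embedding of $X$; consequently $\widetilde X=F(X)$ is LNE if and only if $X$ already was, and the construction accomplishes nothing. Concretely, take the cusp $X=\{(s,0):s\ge 0\}\cup\{(s,s^2):s\ge 0\}$ with the obvious two-pancake decomposition. For $x=(s,0)$ and $y=(s,s^2)$ one has $\|x-y\|=s^2$ while $d_{X,P}(x,y)\approx 2s$; your $\rho_i$'s satisfy $|\rho_i(x)-\rho_i(y)|\le s^2$, so $\|F(x)-F(y)\|\approx s^2\ll d_{X,P}(x,y)$ no matter how large $L$ is. The claimed lower bound $\|F(x)-F(y)\|\gtrsim d_{X,P}(x,y)$ simply fails.

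This is not what Birbrair--Mostowski do, and it is not what the paper does. The correct functions are the \emph{pancake} distances to the pancakes themselves, $h_{j+1}(x)=d_{\widetilde X^j,P}(x,X_{j+1}^j)$, added iteratively one coordinate at a time. These are Lipschitz for the inner metric but can be much larger than any outer distance; in the cusp example $h_1(y)=d_{X,P}(y,X_1)\approx s$, which is exactly the scale needed to separate the two branches. The definability of $d_{X,P}$ (Proposition~\ref{prop:pancake_distance}) is what makes these functions usable, and the key estimate (Claim~\ref{claim:rel_LNE} in the paper) shows that after adding $h_{j+1}$ the $(j{+}1)$-th pancake becomes LNE relative to the whole set; iterating over all $k$ pancakes finishes the proof. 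Your remark about global uniform constants is correct and relevant, but it must be applied to this construction, not to the outer-distance one.
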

\begin{proof}[Proof of Theorem \ref{thm:Lip_normal_embeddins}]
We set $\widetilde{X}^0=X$. Let $\{\widetilde{X}_i^0\}_{i=1}^k$ be a pancake decomposition of $\widetilde{X}^0$. Let $d_{\widetilde{X}^0,P}$ be the pancake distance given by the pancake decomposition $\{\widetilde{X}_i^0\}_{i=1}^k$.  

Assume that, for $j\geq 0$, $\widetilde{X}^j$ and $\{\widetilde{X}_i^j\}_{i=1}^k$, a pancake decomposition of $\widetilde{X}^j$, are defined. Let $d_{\widetilde{X}^j,P}$ be the pancake distance given by the pancake decomposition $\{\widetilde{X}_i^j\}_{i=1}^k$. We define $\mu_{j+1}\colon \widetilde{X}^j\to \R^{n+j+1}$ given by $\mu_{j+1}(x)=(x,h_{j+1}(x))$, where $h_{j+1}\colon \widetilde{X}^j\to \R$ is the function given by 
$$
h_{j+1}(x)=d_{\widetilde{X}^j,P}(x,X_{j+1}^j):=\inf \{d_{\widetilde{X}^j,P}(x,y);y\in X_{j+1}^j\}.
$$

Now, we set $\widetilde{X}^{j+1}=\mu_{j+1}(\widetilde{X}^j)$ and $\widetilde{X}_i^{j+1}=\mu_1(\widetilde{X}_i^j)$ for all $i\in \{1,...,k\}$. Note that $\{\widetilde{X}_i^{j+1}\}_{i=1}^k$ is a pancake decomposition of $\widetilde{X}^{j+1}$ and $\mu_{j+1}\colon \widetilde{X}^j\to \widetilde{X}^{j+1}$ is a definable inner lipeomorphism.

\begin{claim}\label{claim:rel_LNE}
There is a constant $K\geq 1$ such that $d_{\widetilde{X}^{j+1}, inn}(x,y)\leq K\|x-y\|$ for all $x\in \widetilde{X}_{j+1}^{j+1}$ and $y\in \widetilde{X}^{j+1}$.
\end{claim}
\begin{proof}
Since $\mu_{j+1}\colon \widetilde{X}^j\to \widetilde{X}^{j+1}$ is an inner lipeomorphism, it is enough to show that there is a constant $K\geq 1$ such that $d_{\widetilde{X}^{j}, P}(x,y)\leq K\|\mu_{j+1}(x)-\mu_{j+1}(y)\|$ for all $x\in \widetilde{X}_{j+1}^{j}$ and $y\in \widetilde{X}^{j}$.

Let $x\in \widetilde{X}_{j+1}^{j}$ and $y\in \widetilde{X}^{j}$. For $\epsilon>0$, consider $x_{\epsilon}\in \widetilde{X}^{j}$ such that $h_{j+1}(y)\geq d_{\widetilde{X}^{j}, P}(x_{\epsilon},y)-\epsilon$. By the definition of a pancake distance, we have
$$
d_{\widetilde{X}^{j}, P}(x,y)\leq \|x-x_{\epsilon}\|+d_{\widetilde{X}^{j}, P}(x_{\epsilon},y)
$$
Thus, if $h_{j+1}(y)\leq \|x-y\|$, we have that $\|x-x_{\epsilon}\|\leq 2\|x-y\|+\epsilon$.
Then, $d_{\widetilde{X}^{j}, P}(x,y)\leq 3\|x-y\|+\epsilon$, for all $\epsilon>0$, and thus $d_{\widetilde{X}^{j}, P}(x,y)\leq 3\|x-y\|$.

On the other hand, if $h_{j+1}(y)> \|x-y\|$, we have
\begin{eqnarray*}
d_{\widetilde{X}^{j}, P}(x,y)&\leq& d_{\widetilde{X}^{j}, P}(y,x_{\epsilon})+\|x-y\|+\|x_{\epsilon}-y\|\\
        &<&2d_{\widetilde{X}^{j}, P}(y,x_{\epsilon})+h_{j+1}(y)\\
        &<& 3h_{j+1}(y)+2\epsilon.
\end{eqnarray*}
Since $x\in \widetilde{X}_{j+1}^{j}$, we have that $h_{j+1}(x)=0$, and thus
$$
h_{j+1}(y)\leq \|\mu_{j+1}(x)-\mu_{j+1}(y)\|=\|(x-y,-h_{j+1}(y))\|.
$$
Then,
$$
d_{\widetilde{X}^{j}, P}(x,y)\leq 3\|\mu_{j+1}(x)-\mu_{j+1}(y)\|+\epsilon
$$
for all $\epsilon>0$, and thus $d_{\widetilde{X}^{j}, P}(x,y)\leq 3\|\mu_{j+1}(x)-\mu_{j+1}(y)\|$.

Therefore,
$d_{\widetilde{X}^{j}, P}(x,y)\leq 3\|\mu_{j+1}(x)-\mu_{j+1}(y)\|$ for all $x\in \widetilde{X}_{j+1}^{j}$ and $y\in \widetilde{X}^{j}$.
\end{proof}

Finally, we set $\widetilde{X}=\widetilde{X}^k$. 

By using the fact that, for each $j\in \{0,...,k-1\}$, $\mu_{j}\colon \widetilde{X}^{j}\to \widetilde{X}^{j+1}$ is a definable inner lipeomorphism, we have that $\widetilde{X}$ is definably inner lipeomorphic to $X$, and this together with Claim \ref{claim:rel_LNE}, we obtain that $\widetilde{X}$ is LNE.
\end{proof}

\subsection{Lipschitz arc-criterion}\label{subsec:arc-criterion_Lip}
In this Subsection, we present a criterion for a definable mapping to be Lipschitz when the involved distances are the inner or the outer distances.

\begin{proposition}
\label{prop:arc_lip_criterion}
Let $X\subset\R^n$ and $Y\subset\R^m$ be definable sets in $\mathcal{S}$ with connected links at infinity. Let $\sigma,\tilde\sigma\in \{inn,out\}$, $\phi\colon X\to \R$ be a radius function for $X$. Let $\varphi\colon (X,d_{X,\sigma})\to (Y,d_{Y,\tilde\sigma})$ be a definable mapping such that $\varphi|_{\Gamma}\colon (\Gamma,d_{X,\sigma}|_{\Gamma})\to (Y,d_{Y,\tilde\sigma})$ is Lipschitz at infinity for any definable curve $\Gamma\subset X$ with connected link at infinity. If $\varphi\colon (X,d_{X,\sigma})\to (Y,d_{Y,\tilde\sigma})$ is not Lipschitz at infinity, then there is a pair of definable arcs $\gamma_1,\gamma_2\colon (r, +\infty)\to X$ such that 
$d_{X,\sigma}(\gamma_1(t),\gamma_2(t))\ll d_{Y,\tilde\sigma}(\varphi(\gamma_1(t)),\varphi(\gamma_2(t)))$ and $\phi(\gamma_1(t))=\phi(\gamma_2(t))=t$ for all big enough $t> 0$.
\end{proposition}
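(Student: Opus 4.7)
My approach is by contradiction: assume $\varphi$ is not Lipschitz at infinity and that no pair of arcs $\gamma_1,\gamma_2$ as in the conclusion exists. I will combine the fiberwise estimate that the non-existence assumption yields with the single-arc Lipschitz hypothesis and Theorem~\ref{main_theorem} to contradict the non-Lipschitzness of $\varphi$.

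The first step turns the non-existence of the witness arcs into a uniform fiberwise bound. Consider the definable function
\[
g(t) := \inf\bigl\{d_{X,\sigma}(x,y)/d_{Y,\tilde\sigma}(\varphi(x),\varphi(y)) : x,y\in X_{\phi,t},\ x\neq y,\ \varphi(x)\neq\varphi(y)\bigr\}.
\]
If $\liminf_{t\to+\infty}g(t)=0$, then the Monotonicity Theorem forces $g$ to be eventually monotone decreasing (or constantly zero), and definable selection produces definable $x(t),y(t)\in X_{\phi,t}$ with $d_{X,\sigma}(x,y)/d_{Y,\tilde\sigma}(\varphi(x),\varphi(y))\to 0$, giving arcs $\gamma_1=x,\gamma_2=y$ as in the conclusion. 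This is excluded by our assumption, so $g(t)\ge 1/N$ for some $N\ge 1$ and all $t$ large; equivalently, $\varphi|_{X_{\phi,t}}$ is $N$-Lipschitz with respect to $d_{X,\sigma}$ for $t$ large.

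Next, the non-Lipschitzness of $\varphi$ at infinity, together with a standard application of Proposition~\ref{prop:selection_lemma}, yields definable arcs $\alpha_1,\alpha_2\colon(r_0,+\infty)\to X$ with $\|\alpha_i(s)\|\to+\infty$ and
\[
d_{X,\sigma}(\alpha_1(s),\alpha_2(s))\ll d_{Y,\tilde\sigma}(\varphi(\alpha_1(s)),\varphi(\alpha_2(s))).
\]
Each image $\Gamma_i:=\alpha_i((r_0,+\infty))$ is a definable curve with connected link at infinity, so by hypothesis $\varphi|_{\Gamma_i}$ is $L_i$-Lipschitz at infinity. By the Monotonicity Theorem applied to $s\mapsto\phi(\alpha_i(s))$, we may reparameterize each $\alpha_i$ by $\phi$, obtaining $\beta_i$ tracing $\Gamma_i$ with $\phi(\beta_i(t))=t$.

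The contradiction comes from comparing $(\alpha_1(s),\alpha_2(s))$ with the pair $(\beta_1(t),\beta_2(t))$. As $(\beta_1(t),\beta_2(t))$ lies on the common fiber $X_{\phi,t}$, the fiberwise estimate gives $d_{Y,\tilde\sigma}(\varphi(\beta_1(t)),\varphi(\beta_2(t)))\le N\,d_{X,\sigma}(\beta_1(t),\beta_2(t))$. Writing $\alpha_i(s)=\beta_i(\phi(\alpha_i(s)))$ and applying the triangle inequality, the single-arc Lipschitz bounds along each $\Gamma_i$ control the drift introduced by the reparameterization; Theorem~\ref{main_theorem} (applicable since $X$ has connected link at infinity) is used when $\sigma=\mathrm{inn}$ to relate the intrinsic inner distance of the fiber to $d_{X,\mathrm{inn}}$. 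Together these yield a uniform upper bound on the ratio $d_{Y,\tilde\sigma}/d_{X,\sigma}$ along $(\alpha_1(s),\alpha_2(s))$ in terms of $N$, $L_1$, $L_2$, and the constant from Theorem~\ref{main_theorem}, contradicting its divergence to $+\infty$. The main obstacle is this final comparison: when $\phi\circ\alpha_1\not\equiv\phi\circ\alpha_2$, the reparameterization shifts the parameter nontrivially, and one must carefully combine the single-arc Lipschitz estimates along $\Gamma_1,\Gamma_2$ with the fiber estimate via Theorem~\ref{main_theorem} so that the triangle inequalities close up into a uniform bound.
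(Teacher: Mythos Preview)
Your contradiction strategy is different from the paper's direct construction, and the overall shape (get arcs $\alpha_1,\alpha_2$ with diverging ratio, reparameterize to a common $\phi$-level, and compare) is reasonable. But Step~4 has a genuine gap.

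You write that ``the single-arc Lipschitz bounds along each $\Gamma_i$ control the drift introduced by the reparameterization.'' They do not. The single-arc hypothesis bounds $d_{Y,\tilde\sigma}(\varphi(p),\varphi(q))$ by $d_{X,\sigma}(p,q)$ for $p,q$ on $\Gamma_i$. What you actually need in order to close the triangle inequalities is a bound on the \emph{source} drift:
\[
d_{X,\sigma}\bigl(\beta_i(t_1),\beta_i(t_2)\bigr)\;\lesssim\;|t_1-t_2|\;\lesssim\;d_{X,\sigma}\bigl(\alpha_1(s),\alpha_2(s)\bigr),
\]
i.e.\ that the parameterization $\beta_i$ of $\Gamma_i$ by the radius function $\phi$ is itself (eventually) Lipschitz. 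Nothing in your hypotheses gives this; it is an o-minimal fact about definable arcs that must be proved separately, and it is precisely the content of the paper's Case~1 (for $\phi=\|\cdot\|$, using the obtuse-triangle geometry and the observation that a definable arc parameterized by norm is an outer lipeomorphism onto its image) together with Case~2 (reducing a general radius function to the norm via an explicit definable lipeomorphism $\psi$). Without this ingredient your final chain of inequalities does not close.

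Two smaller points. First, in Step~1 the function $g(t)$ involves $d_{X,\sigma}$; for $\sigma=\mathrm{inn}$ the inner distance is not definable in general, so you should either pass to the pancake distance of Proposition~\ref{prop:pancake_distance} or, as the paper does, reduce at the outset to $\sigma=\tilde\sigma=\mathrm{out}$ via the Lipschitz normal embedding Theorem~\ref{thm:Lip_normal_embeddins}. Second, your invocation of Theorem~\ref{main_theorem} is not needed here: your $g(t)$ already uses the ambient distance $d_{X,\sigma}$ restricted to the fiber, not $d_{X_{\phi,t},\mathrm{inn}}$, so there is nothing to compare. The paper's proof of this proposition does not use Theorem~\ref{main_theorem} at all.
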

\begin{proof}

If $\sigma=out$ (resp. $\tilde\sigma=out$), we set $\widetilde{X}=X$ (resp. $\widetilde{Y}=Y$) and $\mu_1\colon X\to \widetilde{X}$ (resp. $\mu_2\colon X\to \widetilde{X}$) is the identity mapping, and if $\sigma=inn$ (resp. $\tilde\sigma=inn$), then $\widetilde{X}$ (resp. $\widetilde{Y}$) the Lipschitz normal embedding of $X$ (resp. $Y$) and $\mu_1\colon X\to \widetilde{X}$ (resp. $\mu_2\colon X\to \widetilde{X}$) is the definable inner lipeomorphism given by Theorem \ref{thm:Lip_normal_embeddins}.

Let $\tilde\varphi\colon \widetilde{X} \to \widetilde{Y}$ be the mapping defined as $\tilde\varphi=\mu_2^{-1}\circ \varphi \circ \mu_1$. Thus, $\varphi$ is Lipschitz if and only if $\tilde\varphi$ is outer Lipschitz.

Let $\tilde\phi\colon \widetilde{X}\to \R$ be the function given by $\tilde\phi=\phi\circ \mu_1^{-1}$. Note that $\mu_1^{-1}$ is a restriction of a linear projection, and therefore it is an outer Lipschitz mapping. Thus, $\tilde\phi $ is a radius function for $\widetilde{X}$. Therefore, the following two items are equivalent:
\begin{enumerate}
 \item There is a pair of definable arcs $\gamma_1,\gamma_2\colon (r, +\infty)\to X$ such that $d_{X,\sigma}(\gamma_1(t),\gamma_2(t))\ll d_{Y,\tilde\sigma}(\varphi(\gamma_1(t)),\varphi(\gamma_2(t)))$ and $\phi(\gamma_1(t))=\phi(\gamma_2(t))=t$ for all big enough $t> 0$;
 \item There is a pair of definable arcs $\tilde\gamma_1,\tilde\gamma_2\colon (r, +\infty)\to \widetilde{X}$ such that $\|\tilde\gamma_1(t)-\tilde\gamma_2(t)\|\ll \| \tilde\varphi(\tilde\gamma_1(t))-\tilde\varphi(\tilde\gamma_2(t))\|$ and $\tilde\phi(\tilde\gamma_1(t))=\tilde\phi(\tilde\gamma_2(t))=t$ for all big enough $t> 0$.
\end{enumerate}
This is why, we may assume that $\sigma=\tilde\sigma=out$.

Let $A=\{(u,v,\epsilon)\in \iota(X\setminus\{0\})\times \iota(X\setminus\{0\})\times (0,+\infty);x\not=y,\, \|x\|, \|y\|\leq \epsilon$ and $\|\iota(x)-\iota(y)\|<\epsilon \|\varphi(\iota(x))-\varphi(\iota(y))\|\}$, where $\iota\colon \R^n\setminus \{0\}\to \R^n\setminus \{0\}$ is the mapping given by $\iota(x)=\frac{x}{\|x\|^2}$.
We have that $A$ is a definable set and if $\varphi$ is not Lipschitz at infinity, then $0\in \overline{A}$. By the Curve Selection Lemma, there is a definable arc $\beta=(\beta_1,\beta_2,\beta_3)\colon [0,\varepsilon)\to \overline{A}$ such that $\beta(0)=0$ and $\beta(t)\in A$ for all $t\in (0,\varepsilon)$. Let $\alpha_i\colon(r,+\infty)\to X$ given by $\alpha_i(t)=\iota(\beta_i(\frac{1}{t}))$, $i=1,2$ with $r=\frac{1}{\varepsilon}$.  Note that 
$$
\|\alpha_1(t)-\alpha_2(t)\|\ll \|\varphi(\alpha_1(t))-\varphi(\alpha_2(t))\|.
$$

Now, we divide our proof in the following two cases.

\noindent Case 1. The radius function $\phi$ satisfies: $\phi(x)=\|x\|$ for all $x\in X$.

By reordering the indices, if necessary, we may assume that $\|\alpha_1(t)\|\geq \|\alpha_2(t)\|$ for all big enough $t$. After a reparametrization, we may assume that $\|\alpha_2(t)\|=t$ for all big enough $t$. Let $\gamma_1$ be the parametrization of $\alpha_1$ such that $\gamma_1\colon (r',+\infty)\to Im(\gamma_1)\subset X$ is an outer lipeomorphism and $\|\gamma_1(t)\|=t$ for all big enough $t>0$. Then, $\|\gamma_1(t)-\gamma_2(t)\|\lesssim \|\alpha_1(t)-\gamma_2(t)\|$, where $\gamma_2=\alpha_2$. Indeed, since  $\gamma_1\colon (r',+\infty)\to Im(\gamma_1)$ is an outer lipeomorphism and $\alpha_1(t)=\gamma_1(\|\alpha_1(t)\|)$ for all big enough $t$, we have that 
$$
\|\alpha_1(t)-\gamma_1(t)\|=\|\gamma_1(\|\alpha_1(t)\|)-\gamma_1(t)\|\approx |\|\alpha_1(t)\|-t|.
$$
But the triangle with vertices $\alpha_1(t)$, $\alpha_1'(t):=t\frac{\alpha_1(t)}{\|\alpha_1(t)\|}$ and $\gamma_2(t)$ is obtuse (see Figure \ref{fig:angle_one}), then we have that 
$$
|\|\alpha_1(t)\|-t|=|\|\alpha_1(t)\|-\|\gamma_2(t)\||\lesssim \|\alpha_1(t)-\gamma_2(t)\|.
$$ 
Then, by the isosceles property at infinity (see Proposition \ref{isosceles_property}), we have
$$
\|\gamma_1(t)-\gamma_2(t)\|\lesssim \|\alpha_1(t)-\gamma_2(t)\|.
$$

\begin{figure}[h]
    \centering
\includegraphics[height=8cm]{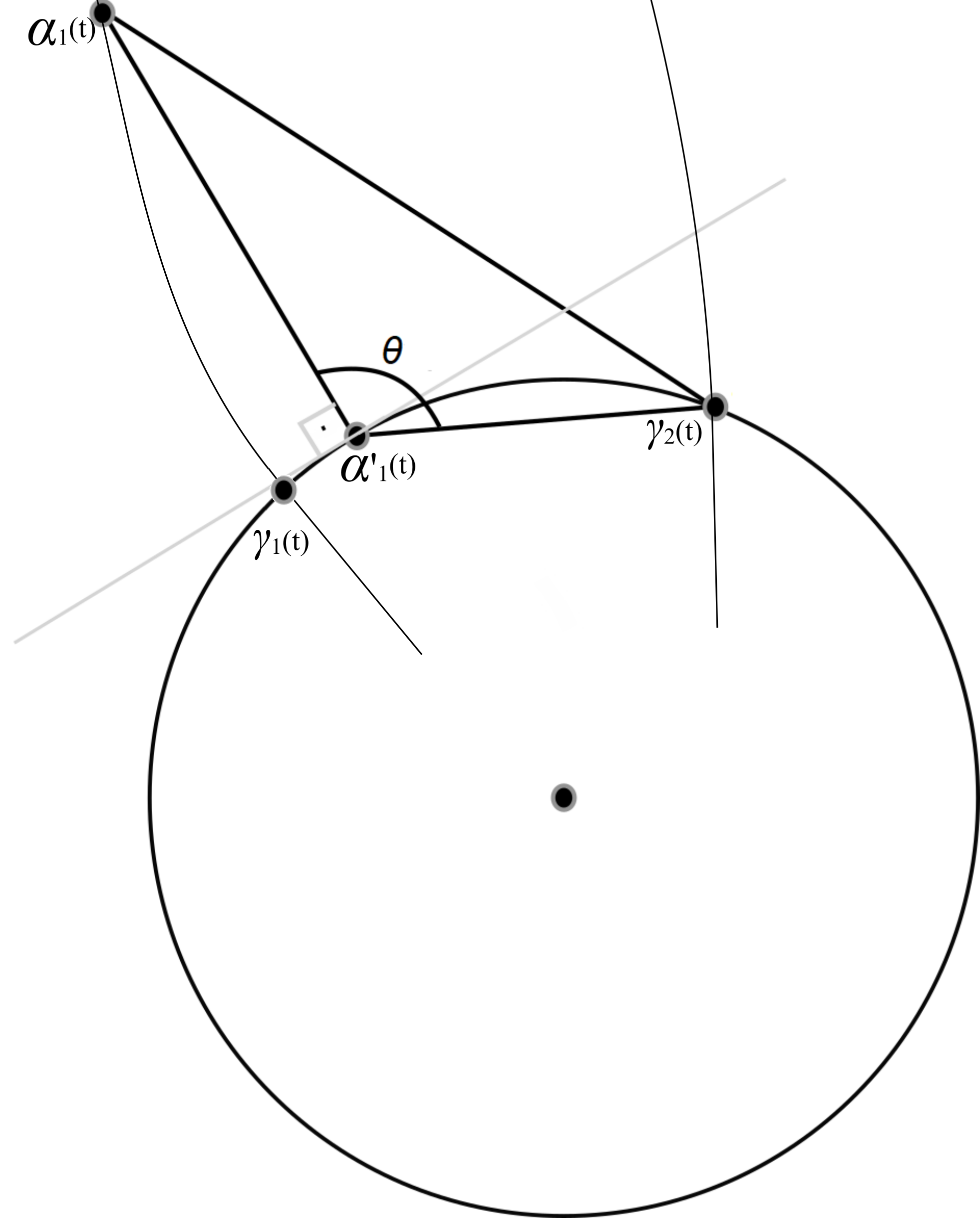}
\caption{Obtuse triangle.}
\label{fig:angle_one}
\end{figure}

Moreover, by the assumptions, we have 
$$
\|\varphi(\alpha_1(t))-\varphi(\gamma_1(t))\|\lesssim \|\alpha_1(t)-\gamma_1(t)\|
$$
for all big enough $t>0$. 
We assume by contradiction that $\|\varphi(\gamma_1(t))-\varphi(\gamma_2(t))\| \lesssim \|\gamma_1(t)-\gamma_2(t)\|$.
Then
\begin{eqnarray*}
\|\alpha_1(t)-\gamma_2(t)\|&\ll& \|\varphi(\alpha_1(t))-\varphi(\gamma_2(t))\|\\
                &\lesssim& \|\varphi(\alpha_1(t))-\varphi(\gamma_1(t))\|+ \|\varphi(\gamma_1(t))-\varphi(\gamma_2(t))\|\\
                &\lesssim& \|\alpha_1(t)-\gamma_1(t)\|+ \|\gamma_1(t)-\gamma_2(t)\|.            
\end{eqnarray*}

This implies that $\|\gamma_1(t)-\gamma_2(t)\|\lesssim \|\alpha_1(t)-\gamma_2(t)\|\ll \|\alpha_1(t)-\gamma_1(t)\|$, which is a contradiction with the isosceles property at infinity (see Proposition \ref{isosceles_property}).
Therefore, $\|\gamma_1(t)-\gamma_2(t)\|\ll \|\varphi(\gamma_1(t))-\varphi(\gamma_2(t))\|$.
In fact, we have that $\|\gamma_1(h(t))-\gamma_2(h(t))\|\ll \|\varphi(\gamma_1(h(t)))-\varphi(\gamma_2(h(t)))\|$, for any definable function $h\colon (r',+\infty)\to (r,\infty)$ such that $\lim\limits_{t\to +\infty}h(t)=+\infty$.

\noindent Case 2. We have a general radius function $\phi$.
In this case, we note that $\phi$ is the restriction of the radius function for $\R^n$, $\tilde\phi\colon \R^n\to \R$, given by $\tilde\phi(y)=\min \{\max\{\frac{1}{C}\|y\|, \bar{\phi}(y)\}, C\|x\|\}$, where $C\geq 1$ is a constant such that $\frac{1}{C}\|x\|\leq \phi(x)\leq C\|x\|$, for all $x\in X$ and $\bar{\phi}\colon \R^n\to \R$ is the definable outer Lipschitz function given by $\bar{\phi}(y)=\inf\{\phi(x)+C\|x-y\|; x\in X\}$. 
By increasing $C$, if necessary, we may assume that $\tilde \phi$ is outer $C$-Lipschitz.

We define $\psi\colon \R^n\to \R^n$ by
$$
\psi(x)=\left\{\begin{array}{ll}
\frac{\tilde\phi(x)}{\|x\|}x,&\mbox{ if }x\not=0,\\
0,&\mbox{ if }x\not=0.
\end{array}\right.
$$

Let $\widetilde \psi \colon \R^n\to \R^n$ given by $\widetilde\psi(x)=\iota\circ\psi\circ\iota(x)$ for $x\not=0$ and $\tilde\psi(0)$. Note that 
$$
\widetilde\psi(x)=\left\{\begin{array}{ll}
\frac{\widetilde{\tilde\phi}(x)}{\|x\|}x,&\mbox{ if }x\not=0,\\
0,&\mbox{ if }x\not=0,
\end{array}\right.
$$
where $\widetilde{\tilde\phi}\colon \R^n\to \R$ is the radius function given by
$$
\widetilde{\tilde\phi}(x)=\left\{\begin{array}{ll}
\frac{1}{\tilde\phi\circ\iota(x)},&\mbox{ if }x\not=0,\\
0,&\mbox{ if }x\not=0.
\end{array}\right.
$$

Thus, for $x,y\in \R^n\setminus\{0\}$, we have
\begin{eqnarray*}
\|\widetilde\psi(x)-\widetilde\psi(y)\|&\leq& \left\|\frac{\widetilde{\tilde\phi}(x)}{\|x\|}x-\frac{\widetilde{\tilde\phi}(y)}{\|y\|}y\right\|\\
                   &\leq& \frac{\widetilde{\tilde\phi}(x)}{\|x\|}\|x-y\|+\frac{1}{\|x\|}\cdot \left|\widetilde{\tilde\phi}(x)\|y\|-\widetilde{\tilde\phi}(y)\|x\|\right|\\
                   &\leq & C\|x-y\|+\frac{\widetilde{\tilde\phi}(x)}{\|x\|}\cdot \left|\|y\|-\|x\|\right|+ \left|\widetilde{\tilde\phi}(x)-\widetilde{\tilde\phi}(y)\right|\\
                   &\leq & 3C\|x-y\|
\end{eqnarray*}

In the same way, we prove also that $\|\psi(x)-\psi(y)\|\leq  3C\|x-y\|$. 
Therefore, $\widetilde\psi$ and $\psi$ are outer Lipschitz mappings. In fact, $\widetilde\psi\colon (\R^n,0)\to (\R^n,0)$ is an outer lipeomorphism (see \cite[Lemma 2.8]{Nhan:2023}). 

Let $K$ be a constant such that $\widetilde\psi^{-1}$ is outer $K$-Lipschitz in some ball $B_r^n(0)$. Thus,  by using Rademacher's theorem, the derivative $D\widetilde\psi^{-1}_x$ exists almost everywhere on $B_r^n(0)$ and $\|D\widetilde\psi^{-1}_x\|\leq K$. Then, $D\psi^{-1}(x)$ exists almost everywhere on $\R^n\setminus B_{\frac{1}{r}}(0)$ and 
\begin{eqnarray*}
\|D\psi^{-1}_y\|&=&\|D\iota_{\widetilde\psi^{-1}(\iota(y))}\cdot D\widetilde\psi^{-1}_{\iota(y)}\cdot D\iota_y\| \\
                &\leq& \|D\iota_{\widetilde\psi^{-1}(\iota(y))}\|\cdot\| D\widetilde\psi^{-1}_{\iota(y)}\|\cdot\| D\iota_y\|\\
                &\leq &\frac{1}{\|\widetilde\psi^{-1}(\iota(y))\|^2}\cdot K \cdot \frac{1}{\|y\|^2}\\
                &\leq& 9C^2K.
\end{eqnarray*}
Therefore, $\psi^{-1}\colon \R^n\setminus B_{\frac{1}{r}}(0)\to \R^n$ is an outer Lipschitz mapping. Since $\psi\colon \R^n\to \R^n$ is an outer Lipschitz mapping, then $\psi$ is a definable outer lipeomorphism at infinity (see also Theorem \ref{thm:charac_outer_geo_lip}).

Let $X^{\psi}=\psi^{-1}(X)$. By the Case 1 of this proof, 
there is a pair of definable arcs $\tilde\gamma_1,\tilde\gamma_2\colon (r, +\infty)\to X^{\psi}$ such that 
$$
\|\tilde\gamma_1(t)-\tilde\gamma_2(t)\|\ll \| \varphi\circ \psi(\tilde\gamma_1(t))-\varphi\circ \psi(\tilde\gamma_2(t))\|
$$
and $\|\tilde\gamma_1(t)\|=\|\tilde\gamma_2(t)\|=t$ for all big enough $t> 0$.

Let $\gamma_i=\psi\circ \tilde\gamma_2(t)$ $i=1,2$. Therefore,
$$
\|\gamma_1(t)-\gamma_2(t)\|\ll \|\varphi(\gamma_1(t))-\varphi(\gamma_2(t))\|
$$
and $\phi(\gamma_1(t))=\phi(\gamma_2(t))=t$, which finishes the proof.
\end{proof}

Similarly, we obtain also the following result:
\begin{proposition}
\label{prop:local_arc_lip_criterion}
Let $X\subset\R^n$ and $Y\subset\R^m$ be definable sets in $\mathcal{S}$ with connected links at 0. Let $\sigma,\tilde\sigma\in \{inn,out\}$, $\phi\colon X\to \R$ be a radius function for $X$. Let $\varphi\colon (X,d_{X,\sigma})\to (Y,d_{Y,\tilde\sigma})$ be a definable mapping such that $\varphi|_{\Gamma}\colon (\Gamma,d_{X,\sigma}|_{\Gamma})\to (Y,d_{Y,\tilde\sigma})$ is Lipschitz around $0$ for any definable curve $\Gamma\subset X$ with connected link at $0$. If $\varphi\colon (X,d_{X,\sigma})\to (Y,d_{Y,\tilde\sigma})$ is not Lipschitz around $0$, then there is a pair of definable arcs $\gamma_1,\gamma_2\colon (0, \varepsilon)\to X$ such that 
$d_{X,\sigma}(\gamma_1(t),\gamma_2(t))\ll d_{Y,\tilde\sigma}(\varphi(\gamma_1(t)),\varphi(\gamma_2(t)))$ and $\phi(\gamma_1(t))=\phi(\gamma_2(t))=t$ for all small enough $t> 0$.
\end{proposition}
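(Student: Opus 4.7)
The plan is to mirror the proof of Proposition \ref{prop:arc_lip_criterion} in an almost verbatim fashion, with the Curve Selection Lemma applied at $0$ in place of at infinity. In fact, the local version is somewhat simpler, because no preliminary inversion is required: in the infinity proof, the inversion $\iota$ was used only to bring a pair of points near infinity back to the origin so that the Curve Selection Lemma could be invoked. First, I would reduce to the purely outer case $\sigma=\tilde\sigma=\mathrm{out}$ exactly as in the infinity version: use the Lipschitz normal embedding (Theorem \ref{thm:Lip_normal_embeddins}) to replace $X$ by a definably inner-lipeomorphic LNE set $\widetilde X$ via $\mu_1$, and similarly $Y$ by $\widetilde Y$ via $\mu_2$, and observe that $\phi\circ\mu_1^{-1}$ is still a radius function on $\widetilde X$ since $\mu_1^{-1}$ is the restriction of a linear projection and hence outer Lipschitz. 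The original problem then becomes the outer-outer problem for $\tilde\varphi=\mu_2^{-1}\circ\varphi\circ\mu_1$.

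With $\sigma=\tilde\sigma=\mathrm{out}$, I would form the definable set
\[
A=\{(x,y,\epsilon)\in X\times X\times(0,+\infty)\,:\,x\ne y,\ \|x\|,\|y\|\le\epsilon,\ \|x-y\|<\epsilon\|\varphi(x)-\varphi(y)\|\}.
\]
Failure of $\varphi$ to be Lipschitz in any neighbourhood of $0$ is equivalent to $0\in\overline A$. The Curve Selection Lemma then produces a definable arc whose first two components give $\alpha_1,\alpha_2\colon(0,\varepsilon)\to X$ with $\alpha_i(t)\to 0$ and $\|\alpha_1(t)-\alpha_2(t)\|\ll\|\varphi(\alpha_1(t))-\varphi(\alpha_2(t))\|$ as $t\to 0^+$.

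In Case~1 ($\phi=\|\cdot\|$) I would reparametrize so that $\|\alpha_2(t)\|=t$, and reparametrize the image of $\alpha_1$ as an arc $\gamma_1$ with $\|\gamma_1(t)\|=t$. The same obtuse-triangle estimate used in the infinity proof controls $\|\alpha_1(t)-\gamma_1(t)\|$ by $\|\alpha_1(t)-\gamma_2(t)\|$, and Proposition \ref{isosceles_property} (whose proof relies only on the triangle inequality and the $\lesssim$-bookkeeping, and hence is insensitive to the direction in which $t$ runs) then yields $\|\gamma_1(t)-\gamma_2(t)\|\lesssim\|\alpha_1(t)-\gamma_2(t)\|$. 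The hypothesis that $\varphi$ restricted to any definable curve with connected link at $0$ is Lipschitz around $0$ gives $\|\varphi(\alpha_1(t))-\varphi(\gamma_1(t))\|\lesssim\|\alpha_1(t)-\gamma_1(t)\|$, and combining these inequalities exactly as in the infinity proof (together with the triangle inequality for $\varphi$) forces $\|\gamma_1(t)-\gamma_2(t)\|\ll\|\varphi(\gamma_1(t))-\varphi(\gamma_2(t))\|$. In Case~2 (general $\phi$) I would use the same radial map $\psi(x)=\tilde\phi(x)x/\|x\|$ built from an outer-Lipschitz extension $\tilde\phi$ of $\phi$, and invoke \cite[Lemma 2.8]{Nhan:2023} to conclude that $\psi$ is an outer lipeomorphism on a neighbourhood of $0$. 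Pulling the problem back through $\psi$ to $X^\psi=\psi^{-1}(X)$, on which the restriction of $\phi\circ\psi$ coincides up to outer bi-Lipschitz equivalence with $\|\cdot\|$, reduces to Case~1.

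The main obstacle is simply book-keeping: verifying that the obtuse-triangle geometry, the isosceles property, and the lipeomorphism property of $\psi$ all transfer from a complement of a large ball at infinity to a shrinking neighbourhood of $0$. All three ingredients are formally symmetric in the two directions $\|x\|\to+\infty$ and $\|x\|\to 0^+$, so no new analytic difficulty arises and the proof goes through with only cosmetic changes.
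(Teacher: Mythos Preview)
Your proposal is correct and matches the paper's approach: the paper does not give an explicit proof of Proposition~\ref{prop:local_arc_lip_criterion} but simply states that it is obtained ``similarly'' to Proposition~\ref{prop:arc_lip_criterion}, and your outline is exactly the natural execution of that similarity. Your observation that the local version is slightly simpler---the inversion $\iota$ in the definition of the auxiliary set $A$ is no longer needed, and in Case~2 the map $\psi$ is already an outer lipeomorphism near $0$ by \cite[Lemma~2.8]{Nhan:2023} without the Rademacher detour used at infinity---is correct and worth noting.
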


Thus, we obtain the following LNE at infinity arc-criterion:

\begin{corollary}\label{prop:arc-criterion}
Let $A\subset \R^n$ be a definable set in $\mathcal{S}$ with connected link at infinity. Let $\phi\colon A\to \R$ be a radius function for $A$, Then $A$ is not LNE at infinity if and only if there is a pair of definable arcs in $\gamma_1,\gamma_2\colon (\varepsilon, +\infty)\to A$ such that $
\|\gamma_1(t)-\gamma_2(t)\|\ll d_{A,inn}(\gamma_1(t),\gamma_2(t))$ and $\phi(\gamma_1(t))=\phi(\gamma_2(t))=t$ for all big enough $t> 0$. 
\end{corollary}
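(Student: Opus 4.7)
The statement is a direct corollary of Proposition \ref{prop:arc_lip_criterion} applied to the identity mapping $\varphi = \id\colon (A, d_{A,out}) \to (A, d_{A,inn})$ with the given radius function $\phi$. My first step is to observe, from Definitions \ref{general-lipschitz function} and \ref{def:lne}, that $A$ is LNE at infinity if and only if $\varphi$ is Lipschitz at infinity, since both conditions unwind to the existence of a compact $K$ and a constant $C \geq 1$ with $d_{A,inn}(x,y) \leq C\|x-y\|$ for all $x,y \in A\setminus K$. So the corollary is equivalent to the statement that $\varphi$ fails to be Lipschitz at infinity if and only if the required arcs exist.

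To invoke Proposition \ref{prop:arc_lip_criterion} in the nontrivial direction, I must verify its hypothesis: that for every definable curve $\Gamma\subset A$ with connected link at infinity, $\varphi|_\Gamma\colon (\Gamma, d_{A,out}|_\Gamma)\to (A, d_{A,inn})$ is Lipschitz at infinity. Since $d_{A,inn}\le d_{\Gamma,inn}$ on $\Gamma$, this reduces to showing $\Gamma$ itself is LNE at infinity. The connected link hypothesis means $\Gamma\cap \mathbb{S}_r^{n-1}$ is a single point for all large $r$, so by the o-minimal Cell Decomposition and Monotonicity Theorems one obtains a definable $C^1$ parametrization $\gamma(r)$ of $\Gamma$ by radius with $\|\gamma(r)\|=r$. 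Writing $\gamma(r)=r\omega(r)$ with $\omega(r)\in\mathbb{S}^{n-1}$ definable, $\omega$ has a limit at infinity, and since $u\mapsto u\|\omega'(u)\|$ is a definable function tending to $0$, the arclength satisfies $\int_s^t\|\gamma'(u)\|\,du\lesssim |t-s|$. Combined with the reverse triangle inequality $\|\gamma(s)-\gamma(t)\|\ge|s-t|$, this gives $d_{\Gamma,inn}(\gamma(s),\gamma(t))\lesssim \|\gamma(s)-\gamma(t)\|$ at infinity, establishing the LNE property for $\Gamma$.

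Once the hypothesis is in place, Proposition \ref{prop:arc_lip_criterion} applied with $X=Y=A$, $\sigma=out$, $\tilde\sigma=inn$ converts the assumption ``$\varphi$ is not Lipschitz at infinity'' into the desired pair of arcs $\gamma_1,\gamma_2$, since $d_{X,\sigma}(\gamma_1(t),\gamma_2(t))=\|\gamma_1(t)-\gamma_2(t)\|$ and $d_{Y,\tilde\sigma}(\varphi(\gamma_1(t)),\varphi(\gamma_2(t)))=d_{A,inn}(\gamma_1(t),\gamma_2(t))$. For the converse, suppose such arcs exist. Then $\phi(\gamma_i(t))=t\to+\infty$ together with $\tfrac{1}{C}\|\gamma_i(t)\|\leq\phi(\gamma_i(t))\leq C\|\gamma_i(t)\|$ forces $\|\gamma_i(t)\|\to+\infty$, while $\|\gamma_1(t)-\gamma_2(t)\|\ll d_{A,inn}(\gamma_1(t),\gamma_2(t))$ makes the ratio $d_{A,inn}/\|\cdot-\cdot\|$ unbounded on $A$ outside every compact set. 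Hence $A$ is not LNE at infinity.

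The main (and essentially only) obstacle is the auxiliary verification that every definable curve with connected link at infinity is itself LNE at infinity; this is the place where o-minimality does real work in the proof, everything else being a formal consequence of Proposition \ref{prop:arc_lip_criterion}.
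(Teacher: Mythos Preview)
Your proof is correct and follows the same approach as the paper: both apply Proposition~\ref{prop:arc_lip_criterion} to the identity map $\id\colon (A,d_{A,out})\to (A,d_{A,inn})$, after noting that its restriction to any definable curve $\Gamma\subset A$ with connected link at infinity is Lipschitz at infinity. The paper simply asserts this last fact, whereas you supply an argument via the parametrization $\gamma(r)=r\omega(r)$ and the bound on $u\|\omega'(u)\|$; that extra justification is sound (eventual monotonicity of $\|\omega'\|$ and integrability of $\|\omega'\|$ force $u\|\omega'(u)\|\to 0$), so your write-up is a more detailed version of the same proof.
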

\begin{proof}

It is clear that if there is a pair of arcs as above, then $A$ is not LNE at infinity.

Reciprocally, assume that $A$ is not LNE at infinity.
Thus, the identity mapping $\varphi=id\colon (A,d_{A,out})\to (A,d_{A,inn})$ is a definable mapping that is not Lipschitz. However, $\varphi|_{\Gamma}\colon (\Gamma,d_{A,out}|_{\Gamma})\to (A,d_{A,inn})$ is Lipschitz at infinity for any definable curve $\Gamma\subset A$ with connected link at infinity. Thus, the result follows from Proposition \ref{prop:arc_lip_criterion}.
\end{proof}

Similarly, we obtain also the following local LNE arc-criterion, which generalizes the main result in \cite{BirbrairM:2018}.

\begin{corollary}\label{prop:local-arc-criterion}
Let $A\subset \R^n$ be a connected definable set in $\mathcal{S}$. Let $\phi\colon A\to \R$ be a radius function for $A$, Then $A$ is not LNE at infinity if and only if there is a pair of definable arcs in $\gamma_1,\gamma_2\colon [0, \epsilon)\to A$ such that $
\|\gamma_1(t)-\gamma_2(t)\|\ll d_{A,inn}(\gamma_1(t),\gamma_2(t))$ and $\phi(\gamma_1(t))=\phi(\gamma_2(t))=t$ for all small enough $t> 0$. 
\end{corollary}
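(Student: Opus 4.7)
My plan is to imitate, step for step, the derivation of the global arc-criterion Corollary~\ref{prop:arc-criterion} from Proposition~\ref{prop:arc_lip_criterion}, but replacing the latter with its local counterpart, Proposition~\ref{prop:local_arc_lip_criterion}. I observe that the arcs in the statement are defined on $[0,\epsilon)$ with $\phi(\gamma_i(t))=t\to 0^+$, so the condition probes failure of Lipschitz normal embedding \emph{at the origin}; I therefore read ``at infinity'' in the corollary as a typo for ``at $0$'', in keeping with the label \texttt{prop:local-arc-criterion}, the description as a ``local LNE arc-criterion'', and the reference to \cite{BirbrairM:2018}.

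The easy direction (existence of such arcs implies $A$ is not LNE at $0$) falls out of the definitions: the points $\gamma_1(t),\gamma_2(t)$ lie in every neighbourhood of the origin for sufficiently small $t>0$, yet the quotient $d_{A,inn}(\gamma_1(t),\gamma_2(t))/\|\gamma_1(t)-\gamma_2(t)\|$ is unbounded as $t\to 0^+$, ruling out any local LNE constant.

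For the converse, I will apply Proposition~\ref{prop:local_arc_lip_criterion} to the definable identity mapping $\varphi=\id\colon(A,d_{A,out})\to(A,d_{A,inn})$. Failure of LNE at $0$ says precisely that this $\varphi$ is not Lipschitz around $0$. To unlock the proposition I must check that, for every definable curve $\Gamma\subset A$ with connected link at $0$, the restriction $\varphi|_\Gamma\colon(\Gamma,d_{A,out}|_\Gamma)\to(A,d_{A,inn})$ \emph{is} Lipschitz around $0$. Such a $\Gamma$ is a single definable arc-germ at $0$, and the standard local o-minimal inner/outer equivalence on arcs (equivalently, Theorem~\ref{main_theorem}(b) specialised to one dimension) gives $d_{A,inn}(x,y)\le d_{\Gamma,inn}(x,y)\approx\|x-y\|$ for $x,y\in\Gamma$ close to $0$, which is exactly the required Lipschitz estimate for $\varphi|_\Gamma$. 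Proposition~\ref{prop:local_arc_lip_criterion} then delivers definable arcs $\gamma_1,\gamma_2\colon(0,\epsilon)\to A$ with $\phi(\gamma_i(t))=t$ and $\|\gamma_1(t)-\gamma_2(t)\|\ll d_{A,inn}(\gamma_1(t),\gamma_2(t))$; extending continuously by $\gamma_i(0):=0$ yields arcs on $[0,\epsilon)$ as required.

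The only non-routine point in this plan is the verification of the Lipschitz-along-arcs hypothesis of Proposition~\ref{prop:local_arc_lip_criterion}, i.e.\ that $\varphi|_\Gamma$ is Lipschitz near $0$ for every definable $\Gamma\subset A$ with connected link at $0$. This is the local analog of the inner/outer equivalence of curves at infinity exploited in the proof of Corollary~\ref{prop:arc-criterion} and is where I expect to spend the most care, though I do not anticipate any real obstacle.
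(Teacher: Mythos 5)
Your proposal is correct and matches the paper's intended argument: the paper gives no separate proof for this corollary, saying only that it follows ``similarly'' to Corollary~\ref{prop:arc-criterion}, i.e.\ by applying Proposition~\ref{prop:local_arc_lip_criterion} to the identity mapping $\id\colon (A,d_{A,out})\to (A,d_{A,inn})$ after checking the Lipschitz-along-arcs hypothesis, exactly as you do. Your reading of ``LNE at infinity'' as a typo for ``LNE at $0$'' is also the correct interpretation of the statement.
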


\section{LNE vs. LLNE}\label{sec:char_lne}

\begin{definition}
Let $X\subset\R^n$ be a subset, $p\in \overline X$ and $X_t:=X\cap \mathbb{S}_t^{n-1}(p)$ for all $t>0$. We say that $X$ is {\bf link Lipschitz normally embedded at $p$} (or shortly {\bf LLNE at $p$}), if there is a constant $C\geq 1$ such that $d_{X_t, inn}\leq C\|\cdot \|$, for all small enough $t>0$. In this case, we say also that $X$ is $C$-LLNE at $p$. We say that $X$ is {\bf link Lipschitz normally embedded at infinity}  (or shortly {\bf LLNE at infinity}), if there is a constant $C\geq 1$ such that $d_{X_t, inn}\leq C\|\cdot \|$, for all large enough $t>0$. In this case, we say also that $X$ is {\bf $C$-LLNE at infinity}. 
\end{definition}

\begin{definition}
Let $X\subset\R^n$ be a subset. Let $\varphi\colon A\to \R$ be a radius function and $X_{\varphi,t}:=\{x\in X;\varphi(x)=t\}$ for all $t>0$. We say that $X$ is {\bf link LNE at $0$} (resp. {\bf infinity}) {\bf with respect to $\varphi$}, if there is a constant $C\geq 1$ such that $d_{X_{\varphi,t}, inn}\leq C\|\cdot \|$, for all small (resp. large) enough $t>0$. In this case, we say also that $X$ is {\bf $C$-LLNE at $0$} (resp.  {\bf infinity}) {\bf w.r.t. $\varphi$}. 
\end{definition}

As direct consequences of Theorem \ref{main_theorem}, we obtain the following relations between LNE and LLNE notions:
\begin{corollary}\label{cor:lne_llne_equiv}
Let $A\subset \R^n$ be a definable set in $\mathcal{S}$ with connected link at infinity. Then we have the following equivalent statements:
\begin{enumerate}
 \item [(1)] $A$ is LNE at infinity;
 \item [(2)] $A$ is LLNE at infinity;
 \item [(3)] $A$ is LLNE at infinity w.r.t. any radius function $\varphi\colon A\to \R$.
\end{enumerate}
\end{corollary}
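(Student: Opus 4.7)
The plan is to prove the cyclic chain of implications $(1)\Rightarrow(3)\Rightarrow(2)\Rightarrow(1)$, using Theorem \ref{main_theorem} throughout and invoking the arc-criterion at infinity (Corollary \ref{prop:arc-criterion}) for the only nontrivial direction.

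For $(1)\Rightarrow(3)$, I would fix an arbitrary radius function $\varphi\colon A\to\R$ and apply Theorem \ref{main_theorem}(a) to obtain constants $K,r\geq 1$ such that $d_{A_{\varphi,t},inn}(x,y)\leq K\,d_{A,inn}(x,y)$ for all $t>r$ and $x,y\in A_{\varphi,t}$. Assumption (1) provides a compact $K_0$ and $C\geq 1$ with $d_{A,inn}(x,y)\leq C\|x-y\|$ on $A\setminus K_0$; enlarging $r$ if necessary to ensure $A_{\varphi,t}\subset A\setminus K_0$ for $t>r$ (which is possible since $\varphi$ is a radius function), I get $d_{A_{\varphi,t},inn}(x,y)\leq KC\|x-y\|$, which is exactly $C$-LLNE at infinity w.r.t.\ $\varphi$ with constant $KC$.

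For $(3)\Rightarrow(2)$, I would simply take $\varphi(x)=\|x\|$, which is trivially a radius function; then $A_{\varphi,t}=A\cap\mathbb{S}^{n-1}_t(0)=A_t$, so LLNE at infinity w.r.t.\ this $\varphi$ coincides with LLNE at infinity in the sense of the first definition.

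The main obstacle is $(2)\Rightarrow(1)$, and the key idea is to reduce it to a contradiction via the arc-criterion. Suppose $A$ is not LNE at infinity. Applying Corollary \ref{prop:arc-criterion} with the radius function $\phi(x)=\|x\|$, I obtain definable arcs $\gamma_1,\gamma_2\colon(\varepsilon,+\infty)\to A$ with $\|\gamma_1(t)\|=\|\gamma_2(t)\|=t$ and $\|\gamma_1(t)-\gamma_2(t)\|\ll d_{A,inn}(\gamma_1(t),\gamma_2(t))$ for large $t$. Because $\gamma_1(t),\gamma_2(t)\in A_t$ for all large $t$, Theorem \ref{main_theorem}(a) applied to $\phi$ yields
\[
d_{A,inn}(\gamma_1(t),\gamma_2(t))\leq d_{A_t,inn}(\gamma_1(t),\gamma_2(t)).
\]
On the other hand, by hypothesis (2) there is $C\geq 1$ with $d_{A_t,inn}(\gamma_1(t),\gamma_2(t))\leq C\|\gamma_1(t)-\gamma_2(t)\|$ for all large $t$. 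Combining these two inequalities gives $d_{A,inn}(\gamma_1(t),\gamma_2(t))\leq C\|\gamma_1(t)-\gamma_2(t)\|$, contradicting the relation $\|\gamma_1(t)-\gamma_2(t)\|\ll d_{A,inn}(\gamma_1(t),\gamma_2(t))$. Hence $A$ must be LNE at infinity, completing the cycle. The connectedness of the link at infinity is used to invoke both Theorem \ref{main_theorem}(a) and the arc-criterion, and this is where I expect the subtlety to lie: without connectedness, the inequality $d_{A,inn}\leq d_{A_t,inn}$ in Theorem \ref{main_theorem} could fail, breaking the reduction.
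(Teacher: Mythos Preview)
Your proof is correct and follows essentially the same route as the paper: the cycle $(1)\Rightarrow(3)\Rightarrow(2)\Rightarrow(1)$, with Theorem \ref{main_theorem}(a) doing the work in $(1)\Rightarrow(3)$ and the arc-criterion (Corollary \ref{prop:arc-criterion}) supplying the contradiction in $(2)\Rightarrow(1)$. One minor remark: in the step $(2)\Rightarrow(1)$ you cite Theorem \ref{main_theorem}(a) to justify $d_{A,inn}(\gamma_1(t),\gamma_2(t))\leq d_{A_t,inn}(\gamma_1(t),\gamma_2(t))$, but this inequality is trivial from the inclusion $A_t\subset A$ (any path in $A_t$ is already a path in $A$) and holds regardless of connectedness; the nontrivial content of Theorem \ref{main_theorem} is the reverse inequality, which you only need in $(1)\Rightarrow(3)$.
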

\begin{proof}
$(3)\Rightarrow (2)$ is trivial.

$(2)\Rightarrow (1)$. 
Suppose that $A$ is not LNE at infinity. 

By Corollary \ref{prop:arc-criterion}, there is a pair of definable arcs in $\gamma_1,\gamma_2\colon (\varepsilon, +\infty)\to A$ such that $
\|\gamma_1(t)-\gamma_2(t)\|\ll d_{A,inn}(\gamma_1(t),\gamma_2(t))$ and $\|\gamma_1(t)\|=\|\gamma_2(t)\|=t$ for all big enough $t> 0$. 
Since $d_{A,inn}(\gamma_1(t),\gamma_2(t))\leq d_{A_t, inn}(\gamma_1(t),\gamma_2(t))$, for all big enough $t>0$, it follows that $\|\gamma_1(t)-\gamma_2(t)\|\ll d_{A_t,inn}(\gamma_1(t),\gamma_2(t))$, where $A_t=\{x\in A;\|x\|=t\}$. So, $A$ is not LLNE at infinity.

$(1)\Rightarrow (3)$. Assume that $A$ is LNE at infinity. So, there are a compact subset $\tilde K\subset \R^n$ and a constant $C\geq 1$ such that $d_{A,inn}(x,y)\leq C \|x-y\|$, for all $x,y\in A\setminus \tilde K$. 

Let $\varphi\colon A\to \R$ be a radius function.
By Theorem \ref{main_theorem}, there are constant $r,K\geq 1$ such that $d_{A_{\varphi,t},inn}(x,y)\leq K d_{A,inn}(x,y)$ for all $x,y\in A_{\varphi,t}$ and $t>r$. Then,
$$
d_{A_{\varphi,t},inn}(x,y)\leq KC\|x-y\|
$$
for all $x,y\in A_{\varphi,t}$ and all big enough $t>0$. Therefore $A$ is LLNE w.r.t. $\varphi$.
\end{proof}

Similarly, we also have the local version of the above result, which is an adaptation of the main result in \cite{mendessampaio} and was already proved in \cite{Nhan:2023}.

Given a set $X\subset\R^n$, the {\bf inversion of $X$} is the set $\iota(X\setminus \{0\})$, where $\iota\colon \R^n\setminus \{0\}\to \R^n\setminus \{0\}$ is the mapping given by $\iota(x)=\frac{x}{\|x\|^2}$. The mapping $\iota$ is called the {\bf inversion mapping of $\R^n$}.

Let $\rho\colon \mathbb{S}^n\setminus \{e_{n+1}\}\to\R^n$ be the stereographic projection (of the $e_{n+1}$), which is given by $\rho(x,t)=\frac{x}{1-t}$, where $e_{n+1}=(0,...,0,1)$.
We denote by $\widehat{X}$ the set $\rho^{-1}(X)\cup \{e_{n+1}\}$ whenever $X$ is an unbounded subset of $\R^n$. When $X$ is a bounded subset of $\R^n$, $\widehat{X}$ is just $\rho^{-1}(X)$. Note that when $X$ is a closed subset of $\R^n$, $\widehat{X}$ is a one-point compactification of $X$. $\widehat{X}$ is called the {\bf stereographic modification of $X$}.

Thus, by Corollaries \ref{prop:arc-criterion}, \ref{prop:local-arc-criterion} and  \ref{cor:lne_llne_equiv} (and its local version), we obtain the following:
\begin{corollary}
\label{cor:charac_LNE_infinity}
Let $X\subset\R^n$ be a definable set in $\mathcal{S}$ with connected link at infinity. Then, the following statements are equivalent:
\begin{enumerate}
 \item $X$ is LNE at infinity;
 \item Its stereographic modification $\widehat{X}$ is LNE at $e_{n+1}$;
 \item Its inversion $\iota(X\setminus \{0\})$ is LNE at $0$.
\end{enumerate}
\end{corollary}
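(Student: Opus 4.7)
The plan is to combine the LNE vs.\ LLNE equivalence (Corollary \ref{cor:lne_llne_equiv} together with its local version stated in the paragraph preceding this corollary) with the explicit conformal scaling of the inversion $\iota$ and of the inverse stereographic projection $\rho^{-1}$. Both maps send Euclidean spheres to Euclidean spheres with a clean homothetic scaling, so the link-level LLNE condition will translate cleanly between the three settings.

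Before running the computations, I would check that the three links in question are connected. The link of $X$ at infinity is connected by hypothesis, and since $\iota$ is a homeomorphism of $\R^n \setminus \{0\}$ while $\rho^{-1}$ is a homeomorphism of $\R^n$ onto $S^n \setminus \{e_{n+1}\}$, the link of $\iota(X \setminus \{0\})$ at $0$ and that of $\widehat{X}$ at $e_{n+1}$ are also connected; hence the local version of Corollary \ref{cor:lne_llne_equiv} applies to these sets.

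For $(1) \Leftrightarrow (3)$, the key observation is that $\iota|_{\mathbb{S}^{n-1}_t(0)}$ is the homothety of ratio $1/t^2$ onto $\mathbb{S}^{n-1}_{1/t}(0)$. Thus for every $x,y \in X \cap \mathbb{S}^{n-1}_t(0)$,
$$
\|\iota(x)-\iota(y)\| = \tfrac{1}{t^2}\|x-y\|, \qquad d_{\iota(X \setminus \{0\}) \cap \mathbb{S}^{n-1}_{1/t}(0),\,inn}(\iota(x),\iota(y)) = \tfrac{1}{t^2}\, d_{X \cap \mathbb{S}^{n-1}_t(0),\,inn}(x,y),
$$
so $X$ is $C$-LLNE at infinity (with radius function $\|\cdot\|$) if and only if $\iota(X \setminus \{0\})$ is $C$-LLNE at $0$. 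Combined with the LNE vs.\ LLNE equivalences, this yields $(1) \Leftrightarrow (3)$. For $(1) \Leftrightarrow (2)$, the analogous computation with $\rho^{-1}$ shows that $\rho^{-1}$ maps $\mathbb{S}^{n-1}_R(0) \subset \R^n$ onto $\mathbb{S}^n_{r(R)}(e_{n+1}) \cap \mathbb{S}^n$ with $r(R) \approx 2/R$ for large $R$, and this restriction is a bi-Lipschitz homeomorphism whose Euclidean scaling factor depends only on $R$; hence the LLNE constant of $X$ on large spheres around $0$ matches that of $\widehat{X}$ on small spheres around $e_{n+1}$.

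The main obstacle is the stereographic computation, since the spheres $\mathbb{S}^n_r(e_{n+1}) \subset \R^{n+1}$ are not geodesic spheres on $\mathbb{S}^n$. However, by the Möbius property of $\rho$ the intersection $\mathbb{S}^n_r(e_{n+1}) \cap \mathbb{S}^n$ is a Euclidean sphere lying in the hyperplane $\{x_{n+1} = 1 - r^2/2\}$, and a direct computation of $\|\rho(x,t)-\rho(x',t)\|$ in terms of $\|x-x'\|$ for $(x,t),(x',t)$ on this sphere yields the uniform bi-Lipschitz scaling. Once this is in hand, the matching of LLNE constants is immediate and all three statements are seen to be equivalent.
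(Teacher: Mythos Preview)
Your proposal is correct and follows the same route the paper indicates: reduce LNE to LLNE via Corollary~\ref{cor:lne_llne_equiv} and its local version, then transfer the link-level condition through $\iota$ and $\rho$. Your worry about the stereographic step is unnecessary: $\mathbb{S}^n_r(e_{n+1})\cap\mathbb{S}^n$ is exactly the horizontal slice $\{t=1-r^2/2\}$, and on that slice $\rho(x,t)=\tfrac{2}{r^2}\,x$ is a pure homothety (just as in the inversion case), so the LLNE constant transfers verbatim.
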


Since for a definable set $X$, the identity mapping of $X$ is a definable mapping that preserves the outer distance to the origin and also preserves the last coordinate, then Corollary \ref{cor:charac_LNE_infinity} is also a direct consequence of Corollary \ref{thm:charac_geo_lip}.

Since the stereographic projection is an outer lipeomorphism far from $e_{n+1}$ and the inversion mapping is an outer lipeomorphism far from $e_{n+1}$ and infinity, we obtain also the following results.

\begin{corollary}
\label{cor:charac_LNE_infinity_two}
Let $X\subset\R^n$ be a connected definable set in $\mathcal{S}$.  Then, the following statements are equivalent:
\begin{enumerate}
 \item $X$ is LNE;
 \item $\widehat{X}$ is LNE;
\end{enumerate}
Moreover, if $X\setminus \{0\}$ is a connected set, then (1) and (2) are equivalent to the following:
\begin{enumerate}
 \item [(3)] $\iota(X\setminus\{0\})$ is LNE.
\end{enumerate}
\end{corollary}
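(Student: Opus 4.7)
The plan is to reduce global LNE of a connected definable set $Y$ to the conjunction of purely local LNE conditions---one at each point of the closure $\overline Y$ and, if $Y$ is unbounded, one at infinity---and then to transport these conditions one at a time across the stereographic projection $\rho$ (for $(1)\Leftrightarrow(2)$) and across the inversion $\iota$ (for (3)), invoking Corollary \ref{cor:charac_LNE_infinity} at the ``endpoint'' in each case.

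The first step is the following reduction lemma: for a connected definable set $Y\subset\R^n$, $Y$ is LNE if and only if $Y$ is LNE at every $p\in\overline Y$ and, when $Y$ is unbounded, also LNE at infinity. The ``only if'' direction is immediate. For the converse I would combine the Pancake Decomposition Theorem (which supplies a finite definable cover of $Y$ by $C_0$-LNE pieces) with the hypothesis to produce a uniform local LNE radius $\delta>0$: points at outer distance $<\delta$ lie in a common $C_0$-LNE ball and are thus controlled; pairs at outer distance $\ge\delta$ inside a given bounded piece have uniformly bounded inner distance (by definability plus connectedness) and hence bounded inner-to-outer ratio; the LNE-at-infinity clause handles the unbounded tail. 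I apply this reduction to $X$ itself, to $\widehat X\subset\mathbb{S}^n$ (bounded, so the infinity clause is vacuous), and to $\iota(X\setminus\{0\})$.

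For $(1)\Leftrightarrow(2)$, the map $\rho\colon\mathbb{S}^n\setminus\{e_{n+1}\}\to\R^n$ is bi-Lipschitz on every compact subset of its domain, so LNE of $\widehat X$ at any $q\ne e_{n+1}$ is equivalent to LNE of $X$ at $\rho(q)$. The remaining identification---LNE of $\widehat X$ at $e_{n+1}$ versus LNE of $X$ at infinity---is precisely Corollary \ref{cor:charac_LNE_infinity}; note that in either direction the LNE hypothesis at the relevant endpoint forces the corresponding link to be connected (otherwise two separated branches meeting at that endpoint produce arbitrarily close outer pairs with infinite inner distance inside any small neighbourhood), so Corollary \ref{cor:charac_LNE_infinity} does apply. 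Combining with the reduction gives $(1)\Leftrightarrow(2)$. For (3), the identical scheme works with $\iota(x)=x/\|x\|^2$, which is an outer lipeomorphism on each annulus $\{r\le\|x\|\le R\}$: connectedness of $X\setminus\{0\}$ guarantees that $\iota(X\setminus\{0\})$ is connected so that the reduction applies, the inversion transports local LNE pointwise away from $0$ and $\infty$, and Corollary \ref{cor:charac_LNE_infinity} supplies the two ``endpoint swaps'' LNE of $X$ at $\infty$ $\leftrightarrow$ LNE of $\iota(X\setminus\{0\})$ at $0$ and LNE of $X$ at $0$ $\leftrightarrow$ LNE of $\iota(X\setminus\{0\})$ at $\infty$.

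The main obstacle is the reduction lemma itself, i.e.\ promoting a family of pointwise LNE estimates into a single global constant; this is the step that genuinely uses o-minimality (finiteness of pancakes and of branches meeting at each endpoint), whereas the transport step uses only the elementary bi-Lipschitz properties of $\rho$ and $\iota$ together with Corollary \ref{cor:charac_LNE_infinity}. A secondary subtlety, flagged above, is the implicit verification that whenever one of the LNE conclusions holds, the relevant link is connected, so that Corollary \ref{cor:charac_LNE_infinity} may legitimately be invoked.
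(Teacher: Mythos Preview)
Your overall strategy coincides with the paper's one-sentence justification (``since the stereographic projection is an outer lipeomorphism far from $e_{n+1}$ and the inversion mapping is an outer lipeomorphism far from $0$ and infinity''): reduce global LNE to local LNE at each point together with the behaviour at the distinguished endpoint, transport the finite-point conditions via the local outer lipeomorphisms $\rho$ and $\iota$, and invoke Corollary~\ref{cor:charac_LNE_infinity} at the endpoint. You are right to isolate the reduction lemma as the step carrying the actual content, and your outline of its proof (pancake decomposition plus compactness plus the infinity clause) is along the right lines.

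There is, however, a genuine error in your justification for invoking Corollary~\ref{cor:charac_LNE_infinity}. You assert that LNE at the endpoint forces the link there to be connected, because ``two separated branches meeting at that endpoint produce arbitrarily close outer pairs with infinite inner distance inside any small neighbourhood''. This is false: take $X\subset\R^2$ to be the union of the two coordinate axes. The link at $0$ has four components, yet $X$ is LNE at $0$ (indeed globally LNE), precisely because the definition of LNE at $p$ concerns $X\cap U$ \emph{including} $p$, so paths between different branches may pass through the origin and the inner distance stays comparable to the outer distance. The same example shows that $X$ can be LNE at infinity (take $K=\emptyset$) with disconnected link at infinity. Since Corollary~\ref{cor:charac_LNE_infinity} is stated only under the connected-link hypothesis, and Corollary~\ref{cor:charac_LNE_infinity_two} does \emph{not} assume connected link at infinity, you cannot invoke Corollary~\ref{cor:charac_LNE_infinity} as written. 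A correct repair is to split the germ at infinity into its finitely many definable ends (each with connected link), apply Corollary~\ref{cor:charac_LNE_infinity} to each end, and then control the pairwise interaction of distinct ends by observing that their tangent directions at infinity correspond, under $\rho^{-1}$, to the tangent directions of the branches of $\widehat X$ at $e_{n+1}$. The paper's terse proof glosses over this same point.
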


%

We obtain also a positive answer to Question \ref{question:dependency_centre}.
 
\begin{corollary}\label{cor:dependency_centre}
Let $X\subset \R^n$ be a definable set in $\mathcal{S}$. Then, there is a constant $C\geq 1$ such that $X\cap \mathbb{S}_t^{n-1}(p)$ is $C$-LNE, for all large enough $t>0$.
\end{corollary}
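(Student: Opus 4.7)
The plan is to deduce this from Corollary \ref{cor:lne_llne_equiv} by interpreting the $p$-centred sphere sections as level sets of an appropriate radius function. The natural candidate is $\varphi_p\colon X\to \R$ defined by $\varphi_p(x):=\|x-p\|$, which is definable and $1$-Lipschitz by the triangle inequality. The only obstruction to its being a radius function in the sense of the paper is the lower comparison $\varphi_p(x)\geq \frac{1}{C}\|x\|$, which can fail near points with $\|x\|\approx \|p\|$.

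To fix this, I would pass to $X':=X\setminus B_R^n(0)$ with $R\geq 2\|p\|$; on $X'$ the inequalities $\frac{1}{2}\|x\|\leq \|x-p\|\leq 2\|x\|$ hold, so $\varphi_p|_{X'}$ becomes a bona fide radius function. Since LLNE at infinity (the hypothesis implicit in Question \ref{question:dependency_centre}) and connectedness of the link at infinity are both asymptotic properties, $X'$ inherits them from $X$. By Corollary \ref{cor:lne_llne_equiv} applied to $X'$ with the radius function $\varphi_p|_{X'}$, one obtains constants $C,r\geq 1$ such that $X'_{\varphi_p,t}$ is $C$-LNE for every $t>r$. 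For any $t>R+\|p\|$ one has $\mathbb{S}_t^{n-1}(p)\cap B_R^n(0)=\emptyset$, hence $X\cap \mathbb{S}_t^{n-1}(p)=X'_{\varphi_p,t}$, and the conclusion follows with this $C$ for all sufficiently large $t$.

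The main obstacle is conceptual rather than technical: as displayed the statement omits the LLNE-at-infinity (and connected-link) hypotheses that are explicit in Question \ref{question:dependency_centre}, and without them the conclusion simply fails. Once those are granted, the argument reduces to verifying that $\|\cdot-p\|$ plays the role of a radius function outside a large ball, which is routine. If one wanted to relax the connected-link hypothesis, the standard remedy is to split $X$ near infinity into finitely many definable pieces with connected links at infinity (via o-minimal cell decomposition) and argue componentwise; one must then interpret \emph{LNE} in the conclusion componentwise, since the inner distance between different connected components of $X\cap \mathbb{S}_t^{n-1}(p)$ is infinite.
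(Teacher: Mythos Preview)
Your proposal is correct and follows essentially the same route as the paper's own proof: restrict to $Y:=X\setminus B_R^n(0)$ for $R$ large enough so that $\varphi_p(x)=\|x-p\|$ becomes a radius function on $Y$, then invoke Corollary~\ref{cor:lne_llne_equiv} to pass from LLNE at infinity (with respect to the norm) to LLNE at infinity with respect to $\varphi_p$, which gives uniform $C$-LNE for the $p$-centred sphere sections. Your observation that the statement tacitly requires the LLNE-at-infinity and connected-link hypotheses is also correct; the paper's proof makes the LLNE hypothesis explicit only in its final sentence, and the connected-link hypothesis is needed to apply Corollary~\ref{cor:lne_llne_equiv} as you point out.
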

\begin{proof}
Indeed, $\varphi \colon Y:=X\setminus B_{R}^n(0)\to \R$ given by $\varphi(x)=\|x-p\|$ is a radius function for $Y$, and $Y$ is LLNE at infinity if and only if $X$ is. By Corollary \ref{cor:lne_llne_equiv}, $Y$ is LLNE at infinity if and only if $Y$ is LLNE at infinity w.r.t. $\varphi$. However, $Y\cap \mathbb{S}_t^{n-1}(p)=Y_{\varphi,t}$. Thus if $X\subset \R^n$ is LLNE at infinity and $p\in \R^n\setminus\{0\}$, there is a constant $C\geq 1$ such that $X\cap \mathbb{S}_t^{n-1}(p)$ is $C$-LNE, for all large enough $t>0$.
\end{proof}

\section{Outer Lipschitz geometry: local vs. global}
\label{sec:outer_geo}

\begin{theorem}\label{thm:charac_outer_geo_lip}
Let $X\subset\R^n$ and $Y\subset\R^m$ be sets.
Then, the following statements are equivalent:
\begin{enumerate}
 \item $X$ and $Y$ are outer lipeomorphic at infinity;
 \item The germs of the stereographic modifications $(\widehat{X},e_{n+1})$ and $(\widehat{Y},e_{m+1})$ are outer lipeomorphic;
 \item The germs of the inversions $(\iota(X\setminus \{0\}),0)$ and $(\iota(Y\setminus \{0\}),0)$ are outer lipeomorphic. 
\end{enumerate}
\end{theorem}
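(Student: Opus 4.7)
The plan is to establish the bi-Lipschitz equivalences via the explicit change-of-coordinates maps: the inversion $\iota$ for (1)$\iff$(3) and the stereographic projection $\rho$ for (1)$\iff$(2). The key algebraic identity powering everything is
$$\|\iota(x)-\iota(y)\| \;=\; \frac{\|x-y\|}{\|x\|\cdot\|y\|}, \qquad x,y\in\R^n\setminus\{0\},$$
together with the analogous formula $\|\rho^{-1}(x)-\rho^{-1}(y)\|=2\|x-y\|/\sqrt{(1+\|x\|^2)(1+\|y\|^2)}$ and $\|\rho^{-1}(x)-e_{n+1}\|=2/\sqrt{1+\|x\|^2}$, which up to constants and higher-order corrections behave exactly like inversion near infinity. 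So the core of the argument is a single model computation done twice.

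First I would record the following preliminary: if $f\colon X\setminus K\to Y\setminus \widetilde K$ is a $\lambda$-bi-Lipschitz homeomorphism, then $\|f(x)\|\approx\|x\|$ as $\|x\|\to\infty$. This is immediate by fixing a base point $p\in X\setminus K$, applying the Lipschitz bound to $\|f(x)-f(p)\|$ and to $\|f^{-1}(f(x))-f^{-1}(f(p))\|$, and absorbing the lower-order terms for large $\|x\|$. This linear growth control is the fact that will later cancel the norm factors appearing in the inversion formula.

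Next, for (1)$\Rightarrow$(3), given $f$ as above, I would define $g=\iota\circ f\circ\iota$ (recalling $\iota^{-1}=\iota$) on a punctured neighborhood of $0$ in $\iota(X\setminus\{0\})$, which lands in a punctured neighborhood of $0$ in $\iota(Y\setminus\{0\})$. Writing $x=\iota(u)$, $y=\iota(v)$, one computes
$$\|g(u)-g(v)\|=\frac{\|f(\iota(u))-f(\iota(v))\|}{\|f(\iota(u))\|\cdot\|f(\iota(v))\|}\leq \frac{\lambda\,\|\iota(u)-\iota(v)\|}{\|f(\iota(u))\|\cdot\|f(\iota(v))\|}=\frac{\lambda\,\|u-v\|/(\|u\|\|v\|)}{\|f(\iota(u))\|\cdot\|f(\iota(v))\|},$$
and the preliminary step gives $\|f(\iota(u))\|\approx\|\iota(u)\|=1/\|u\|$, so the right-hand side is $\lesssim\|u-v\|$. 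Symmetry, using $f^{-1}$, bounds $g^{-1}$ similarly. The Lipschitz bounds force $g$ to extend continuously by $g(0)=0$, giving the germ lipeomorphism. The converse (3)$\Rightarrow$(1) is identical with the roles of the germs at $0$ and at infinity swapped: conjugating a germ lipeomorphism at $0$ by $\iota$ yields a lipeomorphism at infinity by the same calculation.

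For (1)$\iff$(2), the strategy is the same but using $h=\rho^{-1}\circ f\circ \rho$ near $e_{n+1}$, or equivalently noting that the map $x\mapsto \rho^{-1}(x)-e_{n+1}$ is outer bi-Lipschitz comparable to $2\iota(x)$ on neighborhoods of infinity (this follows directly from the two displayed formulas, since $\sqrt{1+\|x\|^2}\approx\|x\|$ for large $\|x\|$, up to a smooth diffeomorphic correction on any fixed compact set). Hence the same inversion-style computation yields the bi-Lipschitz bound for $h$ near $e_{n+1}$. Finally, (2)$\iff$(3) is obtained by transitivity, or equivalently directly via the observation that on neighborhoods of infinity both $\iota$ and $\rho^{-1}(\cdot)-e_{n+1}$ differ by a smooth outer lipeomorphism. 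The main technical obstacle I anticipate is the bookkeeping at the boundary points: ensuring the constructed germ map indeed extends continuously to the puncture (sending $0\mapsto 0$ or $e_{n+1}\mapsto e_{m+1}$) and that the relevant ``punctured'' domains match up as germs; the Lipschitz estimates themselves are forced by the single identity above combined with the norm-growth lemma.
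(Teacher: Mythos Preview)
Your proposal is correct and the overall architecture---reduce (1)$\Leftrightarrow$(2) to (1)$\Leftrightarrow$(3) by comparing $\rho^{-1}$ with $\iota$ near infinity, and handle (1)$\Leftrightarrow$(3) by conjugating through $\iota$---matches the paper. The substantive difference lies in how the Lipschitz estimate for $\iota\circ f\circ\iota$ is obtained.

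You exploit the exact identity $\|\iota(x)-\iota(y)\|=\|x-y\|/(\|x\|\,\|y\|)$ together with an elementary norm-growth lemma $\|f(x)\|\approx\|x\|$ (proved by fixing a base point). This collapses the Lipschitz bound for $g=\iota\circ f\circ\iota$ to a two-line computation and is entirely self-contained. The paper takes a different route: it first invokes an extension result (citing \cite{Sampaio:2016}, \cite{FernandesS:2020}, \cite{FernandesS:2023}) to assume $\varphi$ is a global outer lipeomorphism $\R^N\to\R^N$ with $\varphi(0)=0$, which supplies the norm comparison $\|\varphi(\iota(x))\|\approx\|\iota(x)\|$ for free; it then proves the Lipschitz estimate by a geometric case analysis---first the case $\|x\|=\|y\|$, then the radial case $y=\lambda x$, and finally the general case by decomposing $u,v$ via the intermediate point $v'=\tfrac{\|u\|}{\|v\|}v$ and an obtuse-triangle argument. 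Your identity-based computation is shorter and avoids the external extension theorem; the paper's argument is more explicitly geometric and yields concrete constants such as $4C^3+C+2C^4$. For (1)$\Leftrightarrow$(2), the paper packages your ``$\rho^{-1}$ behaves like $2\iota$ near infinity'' observation as the relation $\iota\circ\widehat\rho=\rho$, where $\widehat\rho$ is the stereographic projection from $-e_{n+1}$, noted to be an outer lipeomorphism on any cap about $e_{n+1}$; this is equivalent to your formula-based comparison.
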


After this article was finished, the author realized that Theorem \ref{thm:charac_outer_geo_lip}, which is stated below, was already proved in the preprint \cite{GrandjeanO:2023}, which appeared on arXiv fill days before this article. The reader should compare the proofs.
The main difference is that here we give a direct proof instead of a proof by contradiction as it was done in \cite{GrandjeanO:2023}. 

\begin{proof}[Proof of Theorem \ref{thm:charac_outer_geo_lip}]

Let $A,B\subset \R^N$ be subsets and let
$\varphi \colon \R^N\to \R^N$ be an outer lipeomorphism such that $\varphi(A)=B$ and $\varphi(0)=0$.

Let $\tilde \varphi\colon \R^N\setminus\{0\}\to \R^N\setminus\{0\}$ be the mapping $\tilde\varphi=\iota\circ \varphi\circ \iota$. Clearly, $\tilde \varphi$ is a homeomorphism such that $\tilde \varphi(\iota(A\setminus\{0\}))=\iota(B\setminus\{0\})$.

\begin{claim}\label{claim:key_claim}
 $\tilde \varphi$ is an outer lipeomorphism.
\end{claim}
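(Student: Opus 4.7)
The plan is to verify the claim by an explicit computation exploiting the conformal identity satisfied by the inversion $\iota$. The starting point is the standard identity
$$\|\iota(x)-\iota(y)\| = \frac{\|x-y\|}{\|x\|\cdot\|y\|}, \qquad x,y\in\R^N\setminus\{0\},$$
which follows from a one-line expansion of $\|x/\|x\|^2-y/\|y\|^2\|^2$. This identity is exactly what makes the inversion behave well up to a controllable factor depending only on the norms of the two points.

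Next, I would use the hypothesis that $\varphi$ is an outer lipeomorphism with $\varphi(0)=0$. Pick a constant $L\geq 1$ such that both $\varphi$ and $\varphi^{-1}$ are $L$-Lipschitz. Combined with $\varphi(0)=0$, this yields the two-sided bound
$$\tfrac{1}{L}\|z\|\leq \|\varphi(z)\|\leq L\|z\|\qquad\text{for every }z\in\R^N.$$
This uniform control on $\|\varphi(z)\|$ is the reason the argument works and, although elementary, is the only subtle point: it is what prevents the denominator below from degenerating.

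The main computation is then straightforward. For $x,y\in\R^N\setminus\{0\}$, apply the conformal identity twice, first to the pair $(\varphi(\iota(x)),\varphi(\iota(y)))$ and then to $(x,y)$:
$$\|\tilde\varphi(x)-\tilde\varphi(y)\|=\frac{\|\varphi(\iota(x))-\varphi(\iota(y))\|}{\|\varphi(\iota(x))\|\cdot\|\varphi(\iota(y))\|}\leq \frac{L\,\|\iota(x)-\iota(y)\|}{(\|\iota(x)\|/L)\cdot(\|\iota(y)\|/L)}=L^3\,\frac{\|\iota(x)-\iota(y)\|}{\|\iota(x)\|\,\|\iota(y)\|}=L^3\|x-y\|,$$
where the last equality uses $\|\iota(z)\|=1/\|z\|$ and the identity again. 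Hence $\tilde\varphi$ is outer $L^3$-Lipschitz.

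Finally, since $\iota$ is its own inverse on $\R^N\setminus\{0\}$, we have $\tilde\varphi^{-1}=\iota\circ\varphi^{-1}\circ\iota$, and $\varphi^{-1}$ is itself an outer lipeomorphism fixing $0$; applying the same argument with $\varphi^{-1}$ in place of $\varphi$ shows that $\tilde\varphi^{-1}$ is outer Lipschitz as well. Together with the obvious fact that $\tilde\varphi$ is a homeomorphism of $\R^N\setminus\{0\}$, this completes the proof. There is no real obstacle: the whole argument rests on the conformal identity for $\iota$ together with the bound $\|\varphi(z)\|\asymp\|z\|$ coming from $\varphi(0)=0$.
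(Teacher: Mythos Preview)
Your proof is correct and in fact cleaner than the argument the paper gives. The key step---the classical conformal identity $\|\iota(x)-\iota(y)\|=\|x-y\|/(\|x\|\,\|y\|)$---does all the work in one stroke, and the two-sided norm control $\|\varphi(z)\|\asymp\|z\|$ coming from $\varphi(0)=0$ is exactly the right ingredient to feed into it. You even obtain the sharper Lipschitz constant $L^3$.

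The paper takes a different, more hands-on route. Rather than invoking the conformal identity, it splits the estimate into two special configurations: first the case $\|x\|=\|y\|$, then the collinear case $y=\lambda x$, each handled by a direct (and somewhat longer) chain of inequalities. The general case is then reduced to these two via an ``obtuse triangle'' argument: writing $v'=\tfrac{\|u\|}{\|v\|}v$ so that $u,v'$ have equal norm and $v',v$ are collinear, with $\|u-v'\|,\|v'-v\|\le\|u-v\|$. This yields the constant $4C^3+C+2C^4$. The paper also remarks that the same conclusion follows from Rademacher's theorem by controlling derivatives, as used elsewhere in the article. Your approach bypasses both the case analysis and the differentiation argument; the paper's version, on the other hand, is perhaps more self-contained in that it does not appeal to the inversion identity as a black box.
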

\begin{proof}
This claim follows from Rademacher's theorem as was already done in the proof of Proposition \ref{prop:arc_lip_criterion}, but we present here a more geometric proof.

Let us analyse $\tilde\varphi$ by assuming that $\varphi$ and $\varphi^{-1}$ are outer $C-$Lipschitz.
When $\|x\|=\|y\|$, we have the following:
\begin{eqnarray*}
\|\tilde\varphi(x)-\tilde\varphi(y)\|&=&\left\|\frac{\varphi(\iota(x))}{\|\varphi(\iota(x))\|^2}-\frac{\varphi(\iota(y))}{\|\varphi(\iota(y))\|^2} \right\|\\
 &\leq & \left(\frac{2C}{\|\varphi(\iota(x))\|^2}+\frac{2C}{\|\varphi(\iota(y))\|^2}\right)\|\iota(x)-\iota(y)\|\\
  &\leq & 4C^3\|x-y\|.
\end{eqnarray*}
When $y=\lambda x$ and $\lambda \geq 1$, we have the following:
\begin{eqnarray*}
\|\tilde\varphi(x)-\tilde\varphi(y)\|&=&\left\|\frac{\varphi(\iota(x))}{\|\varphi(\iota(x))\|^2}-\frac{\varphi(\iota(y))}{\|\varphi(\iota(y))\|^2} \right\|\\
 &\leq & \frac{1}{\|\varphi(\iota(x))\|^2}\left\|\varphi(\iota(x))-\varphi(\iota(y)) \right\|+\\
 & & + \frac{\|\varphi(\iota(x))\|+\|\varphi(\iota(y))\|}{\|\varphi(\iota(x))\|^2\|\varphi(\iota(y))\|}\left\|\varphi(\iota(x))-\varphi(\iota(y)) \right\|\\
  &\leq & C\|x\|^2\left\|\frac{x}{\|x\|^2}-\frac{x}{\lambda\|x\|^2} \right\|+C^4\|x\|^2\frac{\lambda+1}{\lambda}\left\|\frac{x}{\|x\|^2}-\frac{x}{\lambda\|x\|^2} \right\|\\
  &= & \left(\frac{C}{\lambda}+C^4\frac{\lambda+1}{\lambda}\right)\|x-y\|\\
  &= & \left(C+2C^4\right)\|x-y\|.
\end{eqnarray*}

Thus, for $u,v\in \R^N\setminus \{0\}$ with $\|u\|\leq \|v\|$, let $v'=\frac{\|u\|}{\|v\|}v$. Thus, we have $\|u-v\|\geq \|u-v'\|$ and $\|u-v\|\geq \|v-v'\|$ (see Figure \ref{fig:angle}, which is a slight modification of Fig. 2 in \cite{FernandesS:2019}).
\begin{figure}[h]
    \centering
\includegraphics[height=8cm]{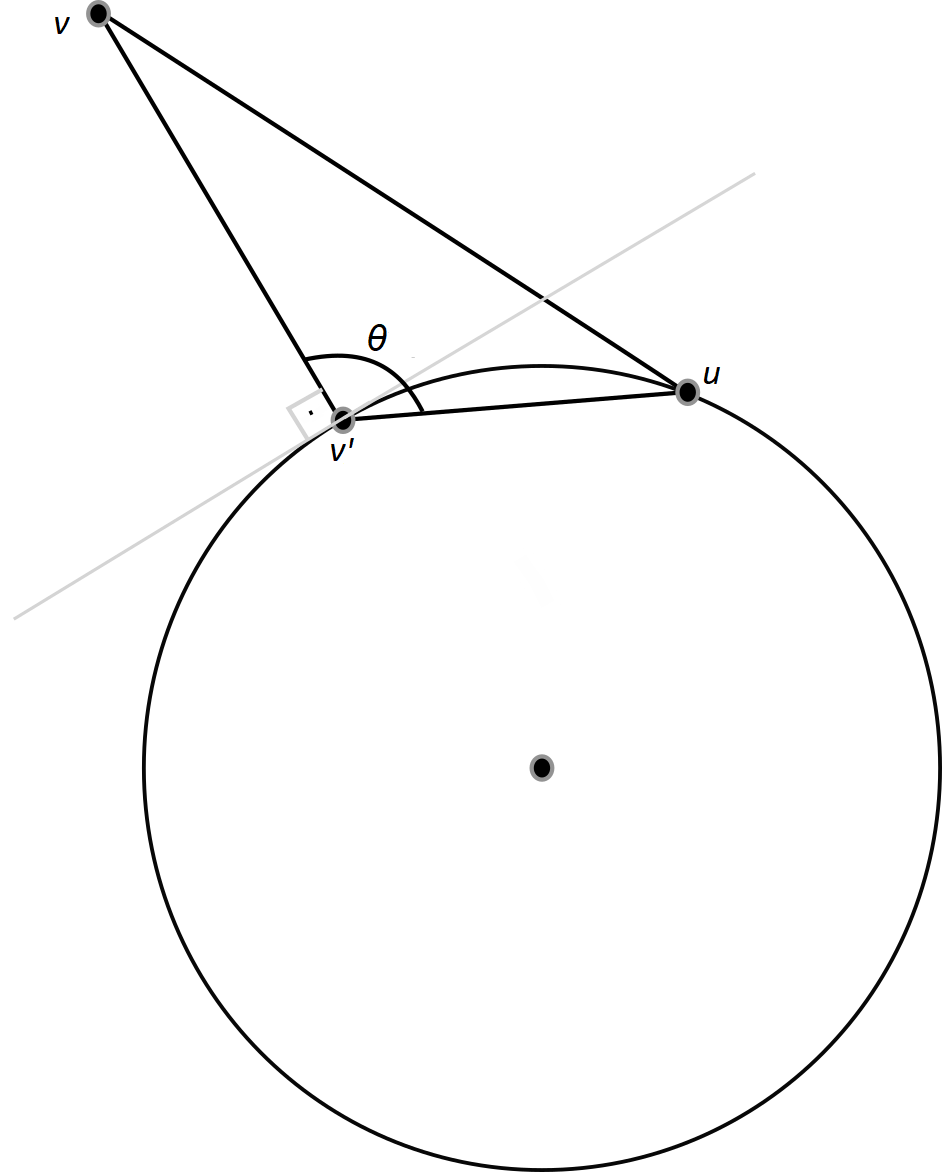}
\caption{Obtuse angle ($\theta>\frac{\pi}{2}$).}
\label{fig:angle}
\end{figure}

Therefore
\begin{eqnarray*}
\|\tilde\varphi(u)-\tilde\varphi(v)\|&\leq&\|\tilde\varphi(u)-\tilde\varphi(v')\|+\|\tilde\varphi(v')-\tilde\varphi(v)\|\\
 &\leq &  4C^3\|u-v'\|+\left(C+2C^4\right)\|v'-v\|\\
 &\leq &  4C^3\|u-v\|+\left(C+2C^4\right)\|u-v\|\\
  &= & (4C^3+C+2C^4)\|u-v\|.
\end{eqnarray*}

Therefore $\tilde \varphi$ is outer Lipschitz.
Similarly, we prove that $\tilde \varphi^{-1}=\iota\circ\varphi^{-1} \circ \iota$ is outer Lipschitz as well.
 
\end{proof}

Now, if we have an outer lipeomorphism $\varphi\colon A\subset \R^n\to B\subset \R^m$. By changing $A$ and $B$ by $ \{0\}\times A\subset \R^m\times \R^n$ and $ \{0\}\times B\subset \R^n\times \R^m$, respectively, we may assume that $n=m=N$ and $\varphi$ is an outer lipeomorphism $\varphi \colon \R^N\to \R^N$ such that $\varphi(A)=B$ (see the details in \cite{Sampaio:2016}, \cite{FernandesS:2020} and \cite{FernandesS:2023}). Since we are interested only in two cases: around $0$ or far from $0$, we may assume that $\varphi(0)=0$. Thus, by Claim \ref{claim:key_claim}, we obtain the equivalence $(1) \Leftrightarrow (3)$.

Now, we are going to show that $(1)\Leftrightarrow (2)$. 
By considering the identifications $\R^n\cong \{0\}\times \R^n$ and $\R^m\cong \{0\}\times \R^m$, we may assume that $m=n$.

Remind that $\iota\colon \R^n\setminus \{0\}\to \R^n\setminus \{0\}$ is the mapping given by $\iota(x)=\frac{x}{\|x\|^2}$ and $\rho\colon \mathbb{S}^n\setminus \{e_{n+1}\}\to\R^n$ is given by $\rho(x,t)=\frac{x}{1-t}$. Let $\widehat{\rho}\colon \mathbb{S}^n\setminus \{-e_{n+1}\}\to\R^n$ be the stereographic projection of the $-e_{n+1}$, which is given by $\widehat\rho(x,t)=\frac{x}{1+t}$. Then $\iota\circ \widehat{\rho}=\rho$.

Now, for any $-1<r<1$, $\widehat{\rho}|_A\colon A:=\{(x,t)\in\mathbb{S}^n;t> r\}\to B_R^n(0)$ is an outer lipeomorphism, where $R=\left(\frac{1-r}{1+r}\right)^{\frac{1}{2}}$. Thus, $\widehat{\rho}\colon (\mathbb{S}^n,e_{n+1})\to (\R^n,0)$ is a germ of an outer lipeomorphism.

If $(X,d_{X,out})$ and $(Y,d_{X,out})$ are lipeomorphic at infinity.
By removing compact subsets of $X$ and $Y$, if necessary, we may assume that there is an outer lipeomorphism $\varphi \colon X\to Y$. By removing larger compact subsets, if necessary, we may assume that $X$ and $Y$ do not intersect $B_r^n(0)$ for some $r>0$. Therefore $\iota(X\setminus \{0\}) $ and $\iota(Y\setminus \{0\}) $ are subsets of $B_{\frac{1}{r}}^n(0)$. By the implication $(1)\Rightarrow (3)$, $\tilde \varphi= \iota\circ \varphi\circ \iota\colon \iota(X\setminus \{0\})\to \iota(Y\setminus \{0\}) $ is an outer lipeomorphism. Thus, $\widehat{\rho}^{-1}\circ\tilde \varphi \circ \widehat{\rho}=\rho^{-1}\circ\varphi \circ \rho\colon (\widehat{X}, e_{n+1})\to (\widehat{Y}, e_{n+1})$ is a germ of an outer lipeomorphism.

Reciprocally, if  $\psi\colon (\widehat{X}, e_{n+1})\to (\widehat{Y}, e_{n+1})$ is a germ of an outer lipeomorphism, $\tilde \varphi=\widehat{\rho}\circ \psi \circ \widehat{\rho}^{-1}\colon (\widehat{\rho}(\widehat{X}), 0)\to (\widehat{\rho}(\widehat{Y}),0)$ is also a germ of an outer lipeomorphism.  By the implication $(3)\Rightarrow (1)$, $ \iota\circ \tilde \varphi\circ \iota=\rho\circ \psi \circ \rho^{-1}\colon X\to Y $ is an outer lipeomorphism at infinity.
\end{proof}

Since the stereographic projection is an outer lipeomorphism far from $e_{n+1}$ and the inversion mapping is an outer lipeomorphism far from $e_{n+1}$ and infinity, as an easy consequence of Theorem \ref{thm:charac_outer_geo_lip}, we obtain the following:
\begin{corollary}\label{cor:charac_outer_geo_lip}
Let $X\subset\R^n$ and $Y\subset\R^m$ be unbounded sets.
Then, the following statements are equivalent:
\begin{enumerate}
 \item $X$ and $Y$ are outer lipeomorphic;
 \item The pointed sets $(\widehat{X},e_{n+1})$ and $(\widehat{Y},e_{m+1})$ are outer lipeomorphic;
 \item The sets $\iota(X\setminus \{0\})\cup \{0\}$ and $\iota(Y\setminus \{0\})\cup \{0\}$ are outer lipeomorphic. 
\end{enumerate}
\end{corollary}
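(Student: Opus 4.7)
The plan is to derive Corollary \ref{cor:charac_outer_geo_lip} from Theorem \ref{thm:charac_outer_geo_lip} by combining it with the observation highlighted just before the corollary: the stereographic projection $\rho$ is an outer bi-Lipschitz equivalence on any region of $\mathbb{S}^n$ bounded away from $e_{n+1}$, and the inversion $\iota$ is an outer bi-Lipschitz equivalence on any region of $\R^n\setminus\{0\}$ bounded away from both $0$ and $\infty$. These facts will supply control of the conjugated maps away from the singular points, while Theorem \ref{thm:charac_outer_geo_lip} supplies control near the singular points; a gluing argument then yields the global conclusion.

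For $(1)\Rightarrow(2)$, I would start from an outer lipeomorphism $\varphi\colon X\to Y$ and define $\widehat\varphi\colon \widehat X\to \widehat Y$ by $\widehat\varphi(e_{n+1})=e_{m+1}$ and $\widehat\varphi=\rho^{-1}\circ\varphi\circ\rho$ elsewhere. Choosing $0<r_1<r_2<1$, I would decompose $\widehat X=K_1\cup K_2$ with $K_1=\widehat X\cap\{(x,t)\in\mathbb{S}^n:t\leq r_2\}$ and $K_2=\widehat X\cap\{(x,t)\in\mathbb{S}^n:t\geq r_1\}$. On $K_1$ the projection $\rho$ is outer bi-Lipschitz, so $\widehat\varphi|_{K_1}$ is outer Lipschitz; on $K_2$, provided $r_1$ is taken close enough to $1$, the map $\widehat\varphi$ coincides with the germ outer lipeomorphism at $e_{n+1}$ produced in the proof of Theorem \ref{thm:charac_outer_geo_lip} and is therefore outer Lipschitz there. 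For the gluing, pairs $x_1\in K_1\setminus K_2$ and $x_2\in K_2\setminus K_1$ satisfy $\|x_1-x_2\|\geq r_2-r_1$ while $\|\widehat\varphi(x_1)-\widehat\varphi(x_2)\|\leq 2$, yielding a Lipschitz bound $2/(r_2-r_1)$ in that mixed case. The inverse of $\widehat\varphi$ is handled symmetrically. The reverse implication $(2)\Rightarrow(1)$ is the mirror argument: given a pointed outer lipeomorphism $\psi\colon(\widehat X,e_{n+1})\to(\widehat Y,e_{m+1})$, set $\varphi=\rho\circ\psi\circ\rho^{-1}\colon X\to Y$, decompose $X$ into a large ball and its unbounded complement, invoke the $(2)\Rightarrow(1)$ direction of Theorem \ref{thm:charac_outer_geo_lip} on the complement and the bi-Lipschitz property of $\rho$ on the bounded part, then glue using that a point in $X\cap B_R^n(0)$ and a point in $X\setminus B_{2R}^n(0)$ have Euclidean distance at least $R$.

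For $(1)\Leftrightarrow(3)$ I would follow the same template with $\iota$ in place of $\rho$. One extra subtlety is that $\iota(X\setminus\{0\})\cup\{0\}$ may itself be unbounded when $X$ contains points near $0$, so the decomposition must cover three zones: a neighborhood of $0$ (handled by the $(1)\Leftrightarrow(3)$ direction of Theorem \ref{thm:charac_outer_geo_lip}), a bounded annulus around $0$ (where $\iota$ is outer bi-Lipschitz), and a neighborhood of infinity (carried by $\iota$ to a neighborhood of $0$ in $X$, where $\varphi$ is outer Lipschitz by hypothesis). The main obstacle I anticipate is precisely this gluing step: ensuring that the separate Lipschitz estimates on overlapping pieces of the decomposition combine into a uniform global bound, and in particular that the mixed case does not introduce unbounded ratios. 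The key observation that makes this routine is that mixed pairs are automatically separated by a positive distance (given by the thickness of the overlap), while the image distances remain bounded, so one recovers a harmless Lipschitz constant for those pairs.
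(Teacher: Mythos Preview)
Your approach for $(1)\Leftrightarrow(2)$ is precisely the elaboration of the paper's one-sentence justification (Theorem~\ref{thm:charac_outer_geo_lip} near the pole, bi-Lipschitzness of $\rho$ away from it), and it is correct. The only slip is the closing gluing principle: ``image distances remain bounded'' is valid when the target lies on a sphere (as in $(1)\Rightarrow(2)$), but fails in $(2)\Rightarrow(1)$, where $\varphi(y)$ is unbounded as $\|y\|\to\infty$. The gluing there needs instead the remark that a Lipschitz map has at most linear growth, so for $\|x\|\le R$ and $\|y\|\ge 2R$ one gets $\|\varphi(x)-\varphi(y)\|\lesssim 1+\|y\|\lesssim\|x-y\|$. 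This is routine and the plan survives.

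For $(1)\Leftrightarrow(3)$ there is a genuine gap, shared by the paper's one-line proof. Your third zone (``a neighborhood of infinity, carried by $\iota$ to a neighborhood of $0$ in $X$, where $\varphi$ is outer Lipschitz'') tacitly assumes that $\iota\circ\varphi\circ\iota$ is a global outer lipeomorphism from $\iota(X\setminus\{0\})\cup\{0\}$ onto $\iota(Y\setminus\{0\})\cup\{0\}$ and that the final application of $\iota$ is harmless. But if some $p\in X\setminus\{0\}$ is sent by $\varphi$ to $0\in Y$, then $\iota\circ\varphi\circ\iota$ has a pole at $\iota(p)$, which sits in your middle annulus, not at infinity. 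Worse, the equivalence itself fails as stated: take $X=(-\infty,-1]\cup\{0\}\cup[1,\infty)\subset\R$ and $Y=X+1$. These are isometric, so $(1)$ holds, yet $\iota(X\setminus\{0\})\cup\{0\}=[-1,1]$ is bounded while $\iota(Y\setminus\{0\})\cup\{0\}=(-\infty,\frac{1}{2}]\cup\{1\}$ is unbounded (because $Y$ accumulates at $0$), so $(3)$ fails. Item~(3) becomes correct under an extra hypothesis such as $0\notin\overline X\cup\overline Y$; under that hypothesis $\iota(X\setminus\{0\})\cup\{0\}$ is bounded, your three-zone decomposition collapses to two, and the argument goes through exactly as for $(1)\Leftrightarrow(2)$.
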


\section{Outer and inner definable Lipschitz geometry: local vs. global}
\label{sec:out_inn_geo}
In this Section, we obtain some versions of Theorem \ref{thm:charac_outer_geo_lip} with stronger statements when the involved sets are definable.

\begin{corollary}
	\label{thm:charac_geo_lip}
Let $X\subset\R^n$ and $Y\subset\R^m$ be definable sets in $\mathcal{S}$ with connected links at infinity. Let $\sigma,\tilde\sigma\in \{inn,out\}$.
Then, the following statements are equivalent:
\begin{enumerate}
 \item There is a definable lipeomorphism at infinity $\varphi\colon (X,d_{X,\sigma})\to (Y,d_{X,\tilde\sigma})$ that preserves the outer distance to the origin;
 \item There is a germ of definable lipeomorphism $\psi\colon (\widehat{X},d_{\widehat{X},\sigma},e_{n+1})\to(\widehat{Y},d_{\widehat{Y},\tilde\sigma},e_{m+1})$ that preserves the last coordinate;
 \item There is a germ of lipeomorphism $\tilde\varphi\colon (\iota(X\setminus \{0\}), d_{\iota(X\setminus \{0\}),\sigma},0)\to (\iota(Y\setminus \{0\}), d_{\iota(Y\setminus \{0\}),\tilde\sigma},0)$ that preserves the outer distance to the origin. 
\end{enumerate}
\end{corollary}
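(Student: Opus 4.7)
The plan is to extend Theorem \ref{thm:charac_outer_geo_lip} to the inner distance by combining the arc-criteria of Section \ref{subsec:arc-criterion_Lip} with the main Theorem \ref{main_theorem}, exploiting that a map preserving $\|\cdot\|$ sends spheres to spheres. First, $(2)\Leftrightarrow(3)$ follows from the germ-level outer lipeomorphism $\widehat\rho\colon(\mathbb{S}^n,e_{n+1})\to(\R^n,0)$, $(x,t)\mapsto x/(1+t)$, which carries $\widehat X$ onto $\iota(X\setminus\{0\})\cup\{0\}$; since $\|\widehat\rho(x,t)\|^2=(1-t)/(1+t)$, preservation of the last coordinate on $\widehat X$ is equivalent to preservation of $\|\cdot\|$ on the inversion. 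So it suffices to prove $(1)\Leftrightarrow(3)$.

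For $(1)\Rightarrow(3)$, the candidate is $\tilde\varphi:=\iota\circ\varphi\circ\iota$, a definable homeomorphism preserving $\|\cdot\|$ because $\|\iota(x)\|=1/\|x\|$. When $\sigma=\tilde\sigma=out$, the Lipschitz property is Claim \ref{claim:key_claim}. For the remaining cases I would argue by contradiction via Proposition \ref{prop:local_arc_lip_criterion}. Its hypothesis, that $\tilde\varphi|_\Gamma$ is Lipschitz around $0$ for every definable curve $\Gamma\subset\iota(X)$ with connected link at $0$, holds because a connected link forces $\Gamma$ and $\tilde\varphi(\Gamma)$ each to admit a unique radial tangent at $0$; thus for $u_1,u_2\in\Gamma$ at radii $t_1,t_2$, the outer distance, the ambient inner distance, and the arc length along either curve are all comparable to $|t_1-t_2|$, giving the bound.

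If $\tilde\varphi$ is not Lipschitz at $0$, the arc-criterion produces definable arcs $\tilde\gamma_1,\tilde\gamma_2\colon(0,\varepsilon)\to\iota(X)$ with $\|\tilde\gamma_i(t)\|=t$ and $d_{\iota(X),\sigma}(\tilde\gamma_1(t),\tilde\gamma_2(t))\ll d_{\iota(Y),\tilde\sigma}(\tilde\varphi\tilde\gamma_1(t),\tilde\varphi\tilde\gamma_2(t))$. Setting $\gamma_i(s):=\iota(\tilde\gamma_i(1/s))$ yields arcs in $X$ with $\|\gamma_i(s)\|=s\to+\infty$. The key rescaling observation is that $\iota|_{\mathbb{S}_R^{n-1}}$ is the homothety $x\mapsto x/R^2$, so both the outer distance and the link-inner distance between two points on $\iota(X)\cap\mathbb{S}_s^{n-1}$ equal $s^2$ times the corresponding distances on $X\cap\mathbb{S}_{1/s}^{n-1}$. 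Applying Theorem \ref{main_theorem}(b) to $\iota(X)$ near $0$ and Theorem \ref{main_theorem}(a) to $X$ near infinity -- and the analogous statements on the $Y$-side, using that $\tilde\varphi$ preserves $\|\cdot\|$ so the target arcs also lie on $\mathbb{S}_t^{n-1}$ -- both sides of the $\ll$-inequality become $s^2$ times distances in $X$ and $Y$ respectively. The $s^2$ factors cancel, producing $d_{X,\sigma}(\gamma_1(s),\gamma_2(s))\ll d_{Y,\tilde\sigma}(\varphi\gamma_1(s),\varphi\gamma_2(s))$, which contradicts the Lipschitz property of $\varphi$ from (1). The direction $(3)\Rightarrow(1)$ is symmetric, using Proposition \ref{prop:arc_lip_criterion} in place of its local counterpart.

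The hardest part will be verifying the curve-wise Lipschitz hypothesis of the arc-criterion and the rescaling bookkeeping in the contradiction step; both hinge on the interplay between the norm-preservation of $\tilde\varphi$ and the transfer of the connected-link property through $\iota$, which identifies a punctured neighbourhood of $0$ in $\iota(X)$ with the complement of a compact set in $X$.
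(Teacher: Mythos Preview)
Your argument is correct, but it takes a heavier route than the paper. The paper's proof is two lines: by Propositions~\ref{prop:arc_lip_criterion} and~\ref{prop:local_arc_lip_criterion} it is enough to treat the case $\dim X=\dim Y=1$; on a definable curve with connected link the inner and outer distances are equivalent (at infinity or at a point), so one may take $\sigma=\tilde\sigma=out$, and then Theorem~\ref{thm:charac_outer_geo_lip} applies directly. In other words, once the arc-criterion hands you a pair of arcs, all four distances $d_{X,\sigma}$, $d_{Y,\tilde\sigma}$, $d_{\iota(X),\sigma}$, $d_{\iota(Y),\tilde\sigma}$ restricted to those arcs and their images are squeezed between the outer distance and the arc-length along the curves, hence equivalent to the outer distance; so the whole question collapses to Claim~\ref{claim:key_claim}.

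Your version instead stays in the full-dimensional ambient sets and carries the inequality $\ll$ through $\iota$ by invoking Theorem~\ref{main_theorem} on both sides (link-inner $\approx$ ambient-inner) together with the sphere-wise scaling $\iota|_{\mathbb{S}_R^{n-1}}=R^{-2}\cdot\mathrm{id}$. This is valid, and it makes the norm-preservation hypothesis do visible work (it is what puts both arc endpoints on the same sphere so that the $t^{\pm2}$ factors cancel). The cost is that you invoke Theorem~\ref{main_theorem}, which the paper's argument does not need at all. Your verification of the single-curve Lipschitz hypothesis of the arc-criterion is fine and is essentially the same observation the paper uses implicitly: a definable arc with connected link, and its image under the norm-preserving $\tilde\varphi$, both have all relevant distances comparable to $|t_1-t_2|$.
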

\begin{proof}
By Propositions \ref{prop:arc_lip_criterion} and \ref{prop:local_arc_lip_criterion}, it is enough to consider the case where $\dim X=\dim Y=1$. But in that case, the distances $d_{X,out}$ and $d_{X,inn}$ are equivalent either at infinity or around $p\in X$.

Thus, we may assume that $\sigma=\tilde\sigma=out$ and that $X$ and $Y$ are definable curves with connected links at infinity and that are LNE. Now, the result follows from Theorem \ref{thm:charac_outer_geo_lip}.
\end{proof}

In fact, we can prove the following generalization of Corollary \ref{thm:charac_geo_lip}:

\begin{theorem}
\label{cor:compactification_eq_two}
Let $X\subset\R^n$ and $Y\subset\R^m$ be connected definable sets in $\mathcal{S}$ with connected links at infinity. Let $\sigma,\tilde\sigma\in \{inn,out\}$. 
Then, the following statements are equivalent:
\begin{enumerate}
 \item $(X,d_{X,\sigma})$ and $(Y,d_{X,\tilde\sigma})$ are lipeomorphic;
 \item The pointed sets $(\widehat{X},d_{\widehat{X},\sigma}, e_{n+1})$ and $(\widehat{Y},d_{\widehat{Y},\tilde\sigma}, e_{m+1})$ are lipeomorphic;
 \item The germs $(\iota(X\setminus \{0\}), d_{\iota(X\setminus \{0\}),\sigma},0)$ and $(\iota(Y\setminus \{0\}), d_{\iota(Y\setminus \{0\}),\tilde\sigma},0)$ are lipeomorphic. 
\end{enumerate}
\end{theorem}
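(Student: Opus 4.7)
The plan is to generalize the proof of Theorem \ref{thm:charac_outer_geo_lip} to handle arbitrary inner/outer distances, relying on the key observation that \emph{every outer lipeomorphism automatically induces an inner lipeomorphism}: a $\lambda$-Lipschitz map stretches any rectifiable path by a factor at most $\lambda$, so the infimum-of-path-lengths definition of $d_{\cdot,inn}$ transfers through outer bi-Lipschitz maps with the same constant. Consequently, whenever $\iota$ or $\widehat{\rho}$ acts as an outer lipeomorphism on some region, it is a lipeomorphism for $d_{\sigma}$ regardless of whether $\sigma=inn$ or $\sigma=out$.

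For $(1)\Leftrightarrow(3)$ I would form the composition $\widetilde{\varphi}:=\iota\circ\varphi\circ\iota$. By Claim \ref{claim:key_claim}, for each $R>0$ the inversion restricts to an outer lipeomorphism $\iota\colon X\setminus B_R(0)\to \iota(X\setminus\{0\})\cap B_{1/R}(0)$ and similarly for $Y$; by the observation above, this restriction is a lipeomorphism in any choice of $d_{\sigma}$. Composing with $\varphi$ (restricted to a neighbourhood of infinity) yields a germ lipeomorphism $(\iota(X\setminus\{0\}),d_{\sigma},0)\to(\iota(Y\setminus\{0\}),d_{\tilde\sigma},0)$, proving $(1)\Rightarrow(3)$. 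Since $\iota^{2}=\mathrm{id}$, the same construction run in reverse gives $(3)\Rightarrow(1)$, using that $X,Y$ have connected links at infinity so that a germ at $0$ of the inversion faithfully encodes the end at infinity.

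For $(2)\Leftrightarrow(3)$ I would use the stereographic projection $\widehat{\rho}\colon\Sp^{n}\setminus\{-e_{n+1}\}\to\R^{n}$ from the south pole, which satisfies $\widehat{\rho}(e_{n+1})=0$ and $\iota\circ\widehat{\rho}=\rho$. Because $\widehat{\rho}$ is an outer lipeomorphism on any relatively compact subset of $\Sp^{n}\setminus\{-e_{n+1}\}$, it identifies the germ at $e_{n+1}$ of $\widehat{X}$ with the germ at $0$ of $\iota(X\setminus\{0\})\cup\{0\}$ in every $d_{\sigma}$, and likewise for $Y$. A pointed lipeomorphism of stereographic modifications restricts to a germ lipeomorphism at $e_{n+1}$, which transfers via $\widehat{\rho}$ to the germ at $0$ of the inversions, giving $(2)\Rightarrow(3)$. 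For the converse, a germ lipeomorphism at $0$ of the inversions is pulled back via $\widehat{\rho}^{-1}$ to a germ lipeomorphism at $e_{n+1}$ of $\widehat{X},\widehat{Y}$; this germ extends to a global pointed lipeomorphism by gluing with a lipeomorphism $X\to Y$ outside a compact set, whose existence is furnished by $(1)$ (equivalent to $(3)$ via the first paragraph).

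The main obstacle is the mixed distance case $\sigma\neq\tilde\sigma$: here the inner map $\varphi$ changes metric type, so one must check that composing with $\iota$'s or $\widehat{\rho}$'s on both sides gives a bi-Lipschitz map of the correct mixed type. The crucial point is that outer bi-Lipschitz on $\iota$ transfers to both inner and outer bi-Lipschitz (via the path-length argument), so the composition $\iota\circ\varphi\circ\iota$ is bi-Lipschitz from $(\iota(X),d_{\sigma})$ to $(\iota(Y),d_{\tilde\sigma})$ in all four combinations $(\sigma,\tilde\sigma)\in\{inn,out\}^{2}$. This mixed-metric bookkeeping, together with the gluing step that promotes a germ at $e_{n+1}$ to a global pointed lipeomorphism in $(2)$, is the only substantive new ingredient beyond the proof of Theorem \ref{thm:charac_outer_geo_lip}.
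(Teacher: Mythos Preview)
Your approach differs from the paper's and has a real gap in the inner-metric case. The observation that an outer bi-Lipschitz homeomorphism $f\colon A\to B$ is automatically inner bi-Lipschitz is correct, but only for the \emph{intrinsic} inner metrics $d_{A,inn}$ and $d_{B,inn}$ of $A$ and $B$ themselves. When you apply this to the restriction of $\iota$ to $X\setminus B_R(0)$, what you obtain is bi-Lipschitz control between $d_{X\setminus B_R,\,inn}$ and $d_{\iota(X\setminus B_R),\,inn}$, not between the restrictions $d_{X,inn}\big|_{X\setminus B_R}$ and $d_{\iota(X\setminus\{0\}),inn}\big|_{\iota(X\setminus B_R)}$, which are the metrics actually appearing in (1) and (3). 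These need not coincide: a path realizing $d_{X,inn}(x,y)$ for $x,y\in X\setminus B_R$ may dip into $B_R$, where $\iota$ carries no uniform Lipschitz bound, so its image in $\iota(X\setminus\{0\})$ has uncontrolled length. Consequently the composition $\iota\circ\varphi\circ\iota$ is only shown to be bi-Lipschitz for the wrong pair of metrics, and the chain does not close. The same mismatch recurs in your $(2)\Leftrightarrow(3)$ step with $\widehat\rho$.

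The paper avoids this by a different mechanism: it invokes the Lipschitz normal embedding theorem (Theorem~\ref{thm:Lip_normal_embeddins}) to replace $X$ and $Y$ by definably inner-lipeomorphic sets $\widetilde X,\widetilde Y$ that are globally LNE, so that $d_{inn}\approx d_{out}$ everywhere and the subset-versus-restriction distinction evaporates. The problem then reduces to the outer case already handled in Theorem~\ref{thm:charac_outer_geo_lip}, together with a Rademacher-type derivative criterion (bounded $\|D\mu\|$, $\|D\mu^{-1}\|$ a.e.) to verify that conjugation by $\iota$ and by $\rho$ preserves the inner-lipeomorphism property of the normal-embedding maps. Your direct route could be salvaged by first proving that $d_{A,inn}\big|_{A\setminus B_R}\approx d_{A\setminus B_R,\,inn}$ for large $R$ (and the local analogue near $0$) whenever $A$ is definable with connected link, but that is itself a nontrivial statement of roughly the same depth as Theorem~\ref{main_theorem} and is absent from your outline.
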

\begin{proof}

Let $\varphi\colon (X,d_{X,\sigma})\to (Y,d_{X,\tilde\sigma})$ be a mapping. Let $\hat{\varphi}\colon (\widehat{X},d_{\widehat{X},\sigma}, e_{n+1})\to (\widehat{Y},e_{m+1})$ and $\tilde\varphi\colon(\iota(X\setminus \{0\}), d_{\iota(X\setminus \{0\}),\sigma},0)\to(\iota(Y\setminus \{0\}), d_{\iota(Y\setminus \{0\}),\tilde\sigma},0)$ be the mappings given by $\hat{\varphi}=\rho^{-1}\circ \varphi\circ \rho$ and $\tilde\varphi=\iota\circ \varphi\circ \iota$.

%
%
%

By Corollary \ref{cor:charac_LNE_infinity}, if $\{X_i\}_{i=1}^k$ (resp. $\{Y_i\}_{i=1}^d$) is a pancake decomposition of $X$ (resp. $Y$), then $\{\widehat{X}_i\}_{i=1}^k$ and $\{\iota(X_i\setminus\{0\})\}_{i=1}^k$ (resp. $\{\widehat{Y}_i\}_{i=1}^d$ and $\{\iota(Y_i\setminus\{0\})\}_{i=1}^d$) are respectively pancake decompositions of $\widehat{X}$ and $\iota(X\setminus\{0\})$ (resp. $\widehat{Y}$ and $\iota(Y\setminus\{0\})$).

If $\sigma=out$ (resp. $\tilde\sigma=out$), we set $\widetilde{X}=X$ (resp. $\widetilde{Y}=Y$) and $\mu_1\colon X\to \widetilde{X}$ (resp. $\mu_2\colon Y\to \widetilde{Y}$) is the identity mapping, and if $\sigma=inn$ (resp. $\tilde\sigma=inn$), then $\widetilde{X}$ (resp. $\widetilde{Y}$) is the Lipschitz normal embedding of $X$ (resp. $Y$) and $\mu_1\colon X\to \widetilde{X}$ (resp. $\mu_2\colon Y\to \widetilde{Y}$) is the definable inner lipeomorphism given by Theorem \ref{thm:Lip_normal_embeddins}.

Similarly, we have mappings $\hat{\mu}_1\colon \widehat{X}\to \widetilde{\widehat{X}}$, $\hat{\mu}_2\colon \widehat{Y}\to \widetilde{\widehat{Y}}$, $\tilde{\mu}_1\colon \iota(X\setminus\{0\})\to \widetilde{\iota(X\setminus\{0\})}$ and $\tilde{\mu}_2\colon \iota(Y\setminus\{0\})\to \widetilde{\iota(Y\setminus\{0\})}$.

Thus, we have the following:
\begin{enumerate}
 \item [i)]  $\varphi$ is a lipeomorphism if and only if $\psi\colon \widetilde{X} \to \widetilde{Y}$ given by $\psi=\mu_2^{-1}\circ \varphi \circ \mu_1$ is an outer lipeomorphism;
 \item [ii)]  $\tilde\varphi$ is a lipeomorphism if and only if $\tilde\psi\colon \widetilde{\iota(X\setminus\{0\})}\to \widetilde{\iota(Y\setminus\{0\})}$ given by $\tilde\psi=\tilde\mu_2^{-1}\circ \tilde\varphi \circ \tilde\mu_1$ is an outer lipeomorphism;
 \item [iii)]  $\hat\varphi$ is a lipeomorphism if and only if $\hat\psi\colon \widetilde{\widehat{X}} \to \widetilde{\widehat{Y}}$ given by $\hat\psi=\hat\mu_2^{-1}\circ \hat\varphi \circ \hat\mu_1$ is an outer lipeomorphism. 
\end{enumerate}

We assume that $\widetilde{X}\subset \R^N$ and $\widetilde{Y}\subset\R^M$.
By Theorem \ref{thm:charac_outer_geo_lip}, we have that the following statements are equivalent:
\begin{enumerate}
 \item $\widetilde{X}$ and $\widetilde{Y}$ are outer lipeomorphic at infinity;
 \item The germs of the stereographic modifications $(\widehat{\widetilde{X}},e_{N+1})$ and $(\widehat{\widetilde{Y}},e_{M+1})$ are outer lipeomorphic;
 \item The germs of the inversions $(\iota(\widetilde{X}\setminus \{0\}),0)$ and $(\iota(\widetilde{X}\setminus \{0\}),0)$ are outer lipeomorphic. 
\end{enumerate}

It follows from Rademacher's theorem that a homeomorphism $\mu\colon A_1\to A_2$, between two definable sets $A_1\subset \R^{n_1}$ and $A_2\subset \R^{n_2}$, is an inner lipeomorphism if and only if there exists a constant $K\geq 1$ such that the derivatives $D\mu_x$ and $D\mu^{-1}_y$ exist, respectively, almost everywhere on $A_1$ and $A_2$ and, moreover, $\|D\mu_x\|\leq K$ and $\|D\mu^{-1}_y\|\leq K$. 
Thus, like it was done in the proof of Proposition \ref{prop:arc_lip_criterion} for the outer distance and by the proof of Theorem \ref{thm:charac_outer_geo_lip}, we obtain that $\mu\colon A_1\to A_2$ is an inner lipeomorphism at infinity if $\iota \circ \mu \circ \iota\colon (\iota(A_1\setminus\{0\}),0)\to (\iota(A_2\setminus\{0\}),0)$ is the germ of an inner lipeomorphism if and only if $\rho^{-1} \circ \mu\circ \rho\colon (\widehat{A}_1,e_{n_1+1})\to (\widehat{A}_2,e_{n_2+1})$ is the germ of an inner lipeomorphism.

Therefore, the proof follows from the above equivalences.

\end{proof}

Since the stereographic projection is an outer lipeomorphism far from $e_{n+1}$ and the inversion mapping is an outer lipeomorphism far from $e_{n+1}$ and infinity, as an easy consequence of Theorem \ref{cor:compactification_eq_two}, we obtain the following:
\begin{corollary}
\label{cor:global_compactification_eq_two}
Let $X\subset\R^n$ and $Y\subset\R^m$ be connected definable sets in $\mathcal{S}$ with connected links at $0$. Let $\sigma,\tilde\sigma\in \{inn,out\}$. 
Then, the following statements are equivalent:
\begin{enumerate}
 \item $(X,d_{X,\sigma})$ and $(Y,d_{X,\tilde\sigma})$ are lipeomorphic;
 \item The pointed sets $(\widehat{X},d_{\widehat{X},\sigma}, e_{n+1})$ and $(\widehat{Y},d_{\widehat{Y},\tilde\sigma},e_{m+1})$ are lipeomorphic;
 \item $(\iota(X\setminus \{0\}), d_{\iota(X\setminus \{0\}),\sigma})$ and $(\iota(Y\setminus \{0\}), d_{\iota(Y\setminus \{0\}),\tilde\sigma})$ are lipeomorphic. 
\end{enumerate}
\end{corollary}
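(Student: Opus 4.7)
The plan is to reduce Corollary \ref{cor:global_compactification_eq_two} to Theorem \ref{cor:compactification_eq_two} by swapping the roles of $0$ and infinity via the inversion map $\iota$. First, I would set $X' = \iota(X \setminus \{0\})$ and $Y' = \iota(Y \setminus \{0\})$. Since $\iota$ is a definable homeomorphism on $\R^n \setminus \{0\}$ that carries neighborhoods of $0$ to neighborhoods of infinity (and vice versa), the connectedness of the links of $X, Y$ at $0$ translates to the connectedness of the links of $X', Y'$ at infinity. Hence Theorem \ref{cor:compactification_eq_two} applies to the pair $(X', Y')$, producing three equivalent conditions $(1'), (2'), (3')$ for $X', Y'$.

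Next, I would match the three conditions of the Corollary with those three equivalent conditions. Condition (3) of the Corollary coincides directly with condition $(1')$ of the Theorem applied to $(X', Y')$, since by definition $X' = \iota(X\setminus\{0\})$. Using the identity $\iota \circ \iota = \id$ on $\R^n\setminus\{0\}$, condition $(3')$ of the Theorem becomes a germ statement at $0$ about $X\setminus\{0\}$ and $Y\setminus\{0\}$, while condition $(2')$ becomes a pointed statement at $e_{n+1}, e_{m+1}$ about $\widehat{X'}, \widehat{Y'}$. The bridge between these and conditions (1), (2) of the Corollary is then the observation that the stereographic projection $\rho$ and the inversion $\iota$ are definable outer lipeomorphisms on compact subsets away from their critical points ($e_{n+1}$ for $\rho$, both $0$ and infinity for $\iota$). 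In particular, a global lipeomorphism of $X$ and $Y$ should conjugate to a pointed global lipeomorphism $(\widehat{X}, e_{n+1}) \to (\widehat{Y}, e_{m+1})$ via $\rho$, and analogously for $\iota$ in the other direction.

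The main obstacle will be verifying the metric compatibility at the critical points, since neither $\rho$ nor $\iota$ is globally bi-Lipschitz. For the outer distance, I would recycle the explicit Lipschitz estimates computed in Claim \ref{claim:key_claim} inside the proof of Theorem \ref{thm:charac_outer_geo_lip}, which already establishes the outer version of the correspondence at infinity versus at a point. For the inner distance, I would combine the Lipschitz normal embedding theorem (Theorem \ref{thm:Lip_normal_embeddins}) to reduce to the outer case, together with Rademacher's theorem to control derivatives almost everywhere, exactly as at the end of the proof of Theorem \ref{cor:compactification_eq_two}. Finally, the arc-criteria of Propositions \ref{prop:arc_lip_criterion} and \ref{prop:local_arc_lip_criterion} would reduce the verification to definable curves, on which the inner and outer distances are equivalent both at $0$ and at infinity, completing the plan.
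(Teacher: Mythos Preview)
Your plan is correct and follows the same route as the paper. The paper's own proof is a single sentence: it says Corollary~\ref{cor:global_compactification_eq_two} is ``an easy consequence of Theorem~\ref{cor:compactification_eq_two}'' together with the fact that the stereographic projection is an outer lipeomorphism far from $e_{n+1}$ and the inversion mapping is an outer lipeomorphism far from $0$ and infinity. Your idea of passing to $X'=\iota(X\setminus\{0\})$, $Y'=\iota(Y\setminus\{0\})$ (which have connected links at infinity) and then invoking Theorem~\ref{cor:compactification_eq_two} is exactly the intended reduction, and your matching of condition~(3) of the Corollary with condition~$(1')$ of the Theorem is the right starting point.

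That said, you are doing more work than necessary in your last two paragraphs. The estimates from Claim~\ref{claim:key_claim}, the Lipschitz normal embedding Theorem~\ref{thm:Lip_normal_embeddins}, Rademacher's theorem, and the arc-criteria of Propositions~\ref{prop:arc_lip_criterion} and~\ref{prop:local_arc_lip_criterion} are the ingredients that went into \emph{proving} Theorem~\ref{cor:compactification_eq_two}; once that theorem is available as a black box you do not need to unpack them again. The only additional input is the elementary observation you already stated: $\rho$ and $\iota$ are outer (hence inner) lipeomorphisms on regions bounded away from their bad points, which lets you promote the germ/at-infinity statements coming out of Theorem~\ref{cor:compactification_eq_two} to the global statements in the Corollary. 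One small bookkeeping point worth making explicit when you bridge condition~(2): $\widehat{X'}$ is the image of $\widehat{X}$ under the reflection $(x,t)\mapsto(x,-t)$ of $\mathbb{S}^n$ (this follows from $\iota\circ\widehat{\rho}=\rho$), and since this reflection is an isometry swapping $e_{n+1}$ and $-e_{n+1}$, the pointed condition on $(\widehat{X'},e_{n+1})$ translates directly to one on $(\widehat{X},-e_{n+1})$, which in turn is handled by the away-from-pole lipeomorphism property of $\rho$.
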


\bigskip

\noindent{\bf Acknowledgements}. The author would like to thank Alexandre Fernandes and Euripedes da C. da Silva for their interest and incentive in this research.

\end{document}